\numberwithin{equation}{section}
\newtheorem{thm}{Theorem}[section]
\newtheorem{cor}[thm]{Corollary}
\newtheorem{lem}[thm]{Lemma}
\newtheorem{prop}[thm]{Proposition}
\newtheorem{fact}[thm]{Fact}
\newtheorem{thmalph}{Theorem}
\newtheorem{coralph}[thmalph]{Corollary}
\theoremstyle{definition}
\newtheorem{definition}[thm]{Definition}
\newtheorem{remark}[thm]{Remark}
\newtheorem{exm}[thm]{Example}
\newenvironment{newenumerate}{
 \begin{enumerate}
 
 }{\end{enumerate}}
\newfont{\bg}{cmr10 scaled \magstep4}
\newcommand{\bigzerou}{\smash{\lower1.7ex\hbox{\bg 0}}}
\newcommand{\R}{\mathbb{R}}
\newcommand{\C}{\mathbb{C}}
\newcommand{\g}{\mathfrak{g}}
\newcommand{\sL}{\mathfrak{sl}}
\newfont{\Bg}{
 cmr10 scaled\magstep5}
\def \c{\mathfrak{c}}
\def \a{\mathfrak{a}}
\def \k{\mathfrak{k}}
\def \p{\mathfrak{p}}
\def \h{\mathfrak{h}}
\def \t{\mathfrak{t}}
\newcommand{\PR}{\operatorname{pr}}
\def \ad{\operatorname{ad}}
\def \Ad{\operatorname{Ad}}
\def \GL{\operatorname{GL}}
\def \Sp{\operatorname{Sp}}
\def\rarrowsim{\smash{\mathop{\,\longrightarrow\,}\limits
  ^{\lower1.5pt\hbox{$\scriptstyle\sim$}}}} 
\newcommand{\bysame}{%
\leavevmode\hbox to 3em{\hrulefill}\,}
\renewcommand{\@makecaption}[2]{%
  \vskip\abovecaptionskip
  \sbox\@tempboxa{#1 #2}%
  \ifdim \wd\@tempboxa >\hsize
    #1: #2\par
  \else
    \global \@minipagefalse
    \hb@xt@\hsize{\hfil\box\@tempboxa\hfil}%
  \fi
  \vskip\belowcaptionskip}
\title{Geometry of coadjoint orbits and multiplicity-one branching laws for 
symmetric pairs
\\[2ex]
\textit{\normalsize
Dedicated to Alexandre Kirillov
 on the occasion of his 81st birthday
 with admiration 
}}
\author{Toshiyuki Kobayashi
\\
\normalsize{Graduate School of Mathematical Sciences,
 and Kavli IPMU,}
\\
 and Salma Nasrin
\\
\normalsize{Department of Mathematics, University of Dhaka}}
\date{}
\begin{document}

\maketitle

\begin{abstract}
Consider the restriction of an irreducible unitary representation $\pi$ of a 
Lie group $G$ to its subgroup $H$. 
Kirillov's revolutionary idea on the orbit method
 suggests
 that the multiplicity of an irreducible $H$-module $\nu$
 occurring in the restriction $\pi|_H$ could be read from 
 the coadjoint action of $H$
 on $\mathcal{O}^G \cap \PR^{-1}({\mathcal{O}}^H)$, 
 provided $\pi$ and $\nu$ are \lq{geometric quantizations}\rq\
 of a $G$-coadjoint orbit $\mathcal{O}^G$ 
 and an $H$-coadjoint orbit $\mathcal{O}^H$, 
respectively,
 where $\PR \colon \sqrt{-1}\g^{\ast} \to \sqrt{-1}\h^{\ast}$ is 
 the projection dual to the inclusion $\h \subset \g$ of Lie algebras.  
Such results were previously established by Kirillov, Corwin and Greenleaf 
for nilpotent Lie groups.  

\smallskip
In this article, we highlight specific elliptic orbits $\mathcal{O}^G$
 of a semisimple Lie group $G$
 corresponding to highest weight modules
 of scalar type. 
We prove that the Corwin--Greenleaf number 
 $\sharp(\mathcal{O}^G \cap \PR^{-1}({\mathcal{O}}^H))/H$
is either zero or one for any $H$-coadjoint orbit $\mathcal{O}^H$, 
 whenever $(G,H)$ is a symmetric pair of holomorphic 
type. 
Furthermore, we determine the coadjoint orbits $\mathcal{O}^H$ 
 with nonzero Corwin--Greenleaf number.   
Our results coincide with the prediction of the orbit philosophy, 
 and can be seen as \lq{classical limits}\rq\
 of the 
{\it multiplicity-free} branching laws 
 of holomorphic discrete series representations 
(T.~Kobayashi [Progr.~Math.~2007]). 
\end{abstract}

\noindent
\textit{Mathematics Subject Classifications} (2010):
Primary
22E46; 
Secondary
22E60, 32M15, 53C35, 81S10. 

\medskip
\noindent
\textit{Keywords and phrases}: orbit method, 
Corwin--Greenleaf multiplicity function, multiplicity-free representations, 
highest weight representations, bounded symmetric domains, branching law, 
coadjoint orbit, 
geometric quantization.

\section{Introduction} \label{S:intro}

The Kirillov--Kostant--Duflo orbit philosophy bridges the unitary dual 
$\widehat{G}$ of a Lie group $G$ and the set $\sqrt{-1}\g^*/G$ 
of coadjoint orbits. 
The orbit method works perfectly
 for simply 
connected nilpotent Lie groups and certain solvable groups $G$, 
including the one-to-one correspondence 
({\it{Kirillov correspondence}})
between $\widehat{G}$ and 
$\sqrt{-1}\g^* / G$ and the functorial properties for inductions 
and restrictions (see \cite{corwin, fujiwara, kirillov62, kirillov99, kirillov}
 and the references therein). 

\smallskip
Our interest is in the restriction of representations to subgroups 
 and its counterpart in the geometry
 of coadjoint orbits.  
First, 
 we consider the representation-theoretic side.  
Let $\pi$ be an irreducible unitary representation of a Lie group $G$,
 and $H$ a subgroup of $G$. Then the restriction $\pi |_{H}$ is 
decomposed into the direct integral of irreducible unitary 
representations of $H$: 

\begin{equation} \label{E:branch} 
\pi |_{H} \simeq \int_{\widehat H}^{\oplus} m_{\pi} (\nu) \nu d 
\mu (\nu).  
\end{equation}
Here $\mu$ is a Borel measure on the unitary dual $\widehat H$, 
 and the measurable function 
$m_{\pi}\colon \widehat{H} \rightarrow \mathbb{N} \cup \{\infty \}$ 
 stands for the multiplicity.  
The decomposition \eqref{E:branch} is called {\it branching law}
 of the restriction $\pi |_{H}$. 
The irreducible decomposition \eqref{E:branch} is unique
 up to equivalence
 if $H$ is of type I, 
 {\it{e.g.}}, 
 if $H$ is nilpotent or reductive.  
We denote by ${\operatorname{Supp}}_H(\pi |_{H})$
 the subset of $\widehat H$ 
 that is the support of the direct integral \eqref{E:branch}.

\smallskip 
Next, 
we consider the coadjoint orbit side.  
The {\it Corwin--Greenleaf multiplicity function} 
\[
n\colon (\sqrt{-1} \g^*/G) \times (\sqrt{-1} \h^*/H) \to \mathbb N \cup \{\infty\}, 
\]
counts the number of $H$-orbits
 in the intersection $\mathcal{O}^G \cap \PR^{-1} (\mathcal{O}^H)$,  
 namely, 
\begin{equation} \label{E:cgmf}
n(\mathcal{O}^G, \mathcal{O}^H) := \sharp \biggl( \bigl(
\mathcal{O}^G \cap \PR^{-1} (\mathcal{O}^H) \bigr)/H \biggr),
\end{equation}
where $\PR\colon \sqrt{-1} \g^* \rightarrow \sqrt{-1} \h^* $ 
is the natural projection. 
When $G$ is a simply connected nilpotent Lie group 
 and $H$ is a connected subgroup, 
 Corwin and Greenleaf \cite{corwin} proved
 that the multiplicity $m_{\pi} (\nu)$ coincides with the geometric number 
$n(\mathcal{O}_{\pi}^G, \mathcal{O}_{\nu}^H)$
 almost everywhere if $\mathcal{O}_{\pi}^G \subset \sqrt{-1} \g^*$
 and $\mathcal{O}_{\nu}^H \subset \sqrt{-1} \h^*$
 are the coadjoint orbits corresponding to $\pi \in \widehat{G}$ and 
$\nu \in \widehat{H}$, respectively, under the Kirillov correspondence.  
Thus the result \cite{corwin} is summarized as 
\begin{equation*}
\begin{array}{ccc}
\text{representation theory}
& & 
\text{geometry of coadjoint orbits}
\\
m_{\pi} (\nu) \quad 
&=& \quad n(\mathcal{O}_{\pi}^G, \mathcal{O}_{\nu}^H).  
\end{array}
\end{equation*}

\smallskip
In contrast to nilpotent groups, it has been 
observed by many specialists
 that the orbit philosophy does not work very well for noncompact semisimple Lie groups $G$, 
 see {\it{e.g.}}, \cite{kirillov99, kirillov, kobayashi94*}. 
Indeed, 
 there does not exist a reasonable one-to-one 
correspondence between $\widehat G$ and $\sqrt{-1}\g^*/G$:
\lq{missing}\rq\
 of coadjoint orbits corresponding to complementary series representations
 ({\textit{cf}}. \cite[Thm.~2.30]{xHKM}), 
 missing of some \lq{unipotent representations}\rq\
 that are supposed to be attached to nilpotent coadjoint orbits, 
 and failure of irreducibility or vanishing
 of nontempered Vogan--Zuckerman $A_{\mathfrak{q}}(\lambda)$-modules
 that are supposed to be attached to elliptic coadjoint orbits
 even for \lq{positive}\rq\
 $\lambda$
 (\cite{K92, Vog88})
 among others, 
 and consequently a rigorous formulation
 for \lq{functional properties}\rq\ 
 in the orbit method is not obvious.  
Nevertheless, 
 we still expect that Kirillov's orbit philosophy 
 provides useful information
 and new insights on unitary representation theory and the geometry of 
coadjoint orbits. 
In fact, 
some successful cases about the functorial properties of the orbit method
 for discretely decomposable restrictions
 to noncompact reductive subgroups $H$ 
 include Kobayashi--{\O}rsted \cite{k-o98}
 for minimal representations
 attached to minimal coadjoint orbits, 
 Duflo--Vargas \cite{duflovargas}
  for discrete series representations
 attached to strongly elliptic orbits,
and a recent work by Paradan \cite{xParadan2015}
for holomorphic discrete series representations.

\smallskip
In this article, we consider the case where $(G, H)$ is a symmetric 
pair of holomorphic type
 (see Definition \ref{def:GHholo} below).  
A typical example is $(G, H) =(\Sp(n, \R), {\operatorname{U}}(p, q))$ and 
$(\Sp(n, \R), \Sp(p, \R) \times \Sp(q, \R))$ with $p+q = n$. 
We highlight on specific coadjoint orbits $\mathcal{O}^G$
 (see \eqref{E:OGZ}), and find 
explicitly the Corwin--Greenleaf function
 for an arbitrary coadjoint orbit $\mathcal{O}^H$
 for any symmetric pair $(G, H)$ of holomorphic type.

Our main results are Theorems \ref{thm:A} and \ref{T:elliptic} 
 which are predicted by the orbit philosophy
 as the `classical limit' of multiplicity-free 
 discretely decomposable restrictions 
 of unitary representations
 that were established earlier 
 (see \cite{kobayashi97, kobayashi07}).  
Our results can be interpreted also from the viewpoint of symplectic geometry, 
 namely,
 the momentum map $\mu \colon \mathcal{O}^G \to \sqrt{-1}\h^{\ast}$
 for the Hamiltonian action
 of the subgroup $H$ on ${\mathcal{O}}^G$
 endowed with the Kirillov--Kostant--Souriau symplectic form
 is a proper map (Corollary \ref{cor:proper})
 with explicit description of its image (Theorem \ref{thm:Cprime})
 indicating that the geometric quantization ${\mathcal{Q}}$
 commutes with reduction
 in this setting,
 symbolically written as 
\[
  {\mathcal{Q}}^H \circ \mu =\text{ Restriction} \circ {\mathcal{Q}}^G. 
\] 

Thus the main features are summarized as follows.  
\begin{equation*}
\begin{array}{l|l}
\text{coadjoint orbits}
&\text{unitary representations}
\\
\hline
\\
\vspace {-0.9cm}
&
\\
\text{${\mathcal{O}}^G$}
&\quad \text{$\pi^G$}
\\
{\mathcal{O}}^G \cap \sqrt{-1} ([\k,\k] + \p)^{\perp} \ne \{0\}
&\quad \text{holomorphic rep.~of scalar type}
\\
\text{$n({\mathcal{O}}^G, {\mathcal{O}}^H) \le 1$ \quad ($\forall {\mathcal{O}}^H$)
\hphantom{MMm} Thm.~\ref{thm:A}}
&\quad\text{$\pi^G|_H$ is multiplicity-free \hphantom{MMMm}\cite{kobayashi97}}
\\
\text{$\mu \colon {\mathcal{O}}^G \to \sqrt{-1}\h^{\ast}$ is proper
\hphantom{MMi}Cor.~\ref{cor:proper}}
&\quad\text{$\pi^G|_H$ is discretely decomposable \cite{xkAnn98}}
\\
\text{$\mathcal O^G \cap \operatorname{pr}^{-1}(\mathcal{O}_\nu^H)\neq \emptyset$
\hphantom{MMMMMi} Thm.~\ref{T:elliptic}}
&\quad\text{${\operatorname{Hom}}_H(\pi_{\nu}^H, \pi^G|_H) \ne \{0\}$ 
\hphantom{MMMM}\cite{kobayashi97}}
\\
\text{${\operatorname{Image}} (\mu \colon {\mathcal{O}}^G \to \sqrt{-1}\h^{\ast})$
\hphantom{MMM} Thm.~\ref{thm:Cprime}}
&\quad\text{$\operatorname{Supp}_H(\pi^G|_H) \, (\subset \widehat{H})$ 
\hphantom{MMMMMb}\cite{kobayashi97}}
\end{array}
\end{equation*}

Theorems \ref{thm:A}, \ref{T:elliptic} and \ref{thm:Cprime}
 for $H=K$ (maximal compact subgroups) were proved
 in the Ph.~D.~thesis \cite{xNasrin2003} of S.~Nasrin
 at The University of Tokyo in 2003, 
 see also \cite{xNasrin2008, nasrin}. 
Alternatively,
 Theorems \ref{thm:A} and \ref{T:elliptic} for $H=K$ follow from 
a result of McDuff \cite{xMcDuff1988}, extended by Deltour \cite{xDeltour2013}
that the coadjoint orbit ${\mathcal{O}}^G$ is symplectomorphic 
 to the vector space $\p$ and from Paradan \cite[Prop.~5.5]{xParadan2008},  
 too. 

The results of this article for noncompact $H$ were delivered at
the workshop \lq\lq Geometric Quantization in the Non-compact Setting\rq\rq\ organized
by L.~Jeffrey, X.~Ma and M.~Vergne
at Oberwolfach, Germany, 13--19 February 2011, and were collected in 
\cite{kobayashi11}. 
Theorem~A was announced earlier in \cite{k-n03}.

\medskip 
\section{Statement of main results}

In this section 
 we formulate our main results
 on the geometry of coadjoint orbits
 that are predicted by the representation-theoretic results
 via the orbit method.  
Theorem A is the counterpart
 of the multiplicity-freeness property
 of the restriction $\pi|_H$ (Fact \ref{F:mf}), 
 and Theorem C is that of its explicit branching law
 (Fact \ref{F:f1}).  

\subsection{Orbit geometry for multiplicity-free representations}

Let $G$ be a noncompact, simple Lie group, 
 $K$ a maximal compact subgroup modulo
 the center of $G$, 
 $\theta$ the corresponding Cartan involution of $G$, 
 and $\g = \k + \p$ for the Cartan decomposition
 of the Lie algebra $\g$ of $G$.  
We say $G$ is a {\it{Hermitian Lie group}}
 if the associated Riemannian symmetric space $G/K$ is 
 a Hermitian symmetric space,
 or equivalently,
 if the center $\c(\k)$ of $\k$ is nonzero.  
In this case $\c(\k)$ is actually one-dimensional.

\smallskip
An irreducible representation $\pi$ of $G$
 is said to be a lowest weight module
 if its underlying $(\g,K)$-module
 is ${\mathfrak{b}}$-finite
 for some Borel subalgebra ${\mathfrak{b}}$
 of the complexified Lie algebra 
 $\g_{\mathbb{C}}=\g \otimes_{\mathbb{R}} {\mathbb{C}}$.  
Moreover it is said to be {\it of scalar} type
 if the minimal $K$-type of $\pi$ is one-dimensional.  
There exists an infinite-dimensional irreducible lowest weight representation
 of a simple Lie group $G$
 if and only if $G$ is a Hermitian Lie group.  
For any simply-connected Hermitian group $G$, 
there exist continuously many lowest weight modules of $G$ of scalar type.

Geometrically,
 any irreducible lowest weight representation $\pi$
 of scalar type
 can be realized
 in the space of holomorphic sections
 for a $G$-equivariant holomorphic line bundle
 over the Hermitian symmetric space $G/K$. 
This geometric observation is brought into the orbit method:
such $\pi$ can be seen
 as a \lq{geometric quantization}\rq\
 of a coadjoint orbit ${\mathcal{O}}^G$ satisfying
\begin{equation} \label{E:OGZ} 
\mathcal{O}^G  \cap \sqrt{-1} ([\k, \k] + \p)^{\perp} \ne \emptyset. 
\end{equation} 
We note that $([\k, \k] + \p)^{\perp} \ne \{0\}$ 
if and only if $G$ is of Hermitian type.
In this case the coadjoint orbit
 ${\mathcal{O}}^G$ satisfying \eqref{E:OGZ} is isomorphic 
 to the Hermitian symmetric space $G/K$
 as $G$-spaces
 unless ${\mathcal{O}}^G = \{0\}$.

\smallskip
Let $ \tau $ be an involutive automorphism of $G$. 
We say that $(G,H)$ is a {\it{symmetric pair}}
 if $H$ is an open subgroup of the 
fixed point group $G^{\tau} := \{g \in G : \tau g = g \}$. 
In this article, we shall assume $H$ is connected for simplicity.

\smallskip
For the representation theory side,
 we recall the following multiplicity-free theorem: 

\begin{fact}
[{\cite{kobayashi07}}]
\label{F:mf}
For any  irreducible unitary lowest weight representation $\pi$ of scalar 
type of $G$ and for any symmetric pair $(G, H)$, the 
restriction $\pi |_{H}$ is multiplicity-free. 
\end{fact}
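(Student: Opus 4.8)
\medskip
\noindent
\emph{A proof strategy for Fact~\textup{\ref{F:mf}}.}
The plan is to convert the branching problem into one in complex geometry and then to invoke a geometric criterion for multiplicity-freeness. Because $\pi$ is an irreducible unitary lowest weight module of scalar type, the Harish--Chandra realization puts it in the space $\mathcal{O}(D,\mathcal{L}_{\lambda})$ of holomorphic sections of a $G$-equivariant holomorphic line bundle $\mathcal{L}_{\lambda}$ over the bounded symmetric domain $D=G/K\subset\p_{\C}^{+}$; the assumption of scalar type is exactly what makes $\mathcal{L}_{\lambda}$ of rank one, and indeed a real power of the canonical bundle of $D$. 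Since $H$ acts on $D$ by biholomorphisms and on $\mathcal{L}_{\lambda}$ equivariantly, it suffices to show that $\mathcal{O}(D,\mathcal{L}_{\lambda})$ is a multiplicity-free representation of $H$, that is, that the commutant of $H$ in any Hilbert completion is commutative.

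The geometric input I would supply is that the $H$-action on $D$ is \emph{strongly visible} and compatible with $\mathcal{L}_{\lambda}$. After arranging $\tau\theta=\theta\tau$ and writing $\g=\h+\q$ for the $\tau$-eigenspace decomposition, I would: (i) choose a maximal abelian subspace $\a\subset\p\cap\q$ sitting inside a $\tau$-stable system of strongly orthogonal noncompact roots, producing a polydisc in $D$ and a totally real slice $S$ with a Cartan-type (``polar'') decomposition $D=H\cdot S$ --- the holomorphic- versus anti-holomorphic-type dichotomy among symmetric pairs $(G,H)$ with $G$ of Hermitian type enters here, through the shape of $S$ and, dually, through the discrete or continuous nature of $\pi|_{H}$; (ii) construct an involutive anti-holomorphic diffeomorphism $\sigma$ of $D$ --- in Jordan-triple (matrix-entry) coordinates on $D$, a complex conjugation possibly twisted by an element of $H\cap K$ --- such that $\sigma|_{S}=\mathrm{id}_{S}$ and $\sigma$ carries every $H$-orbit onto itself; and (iii) lift $\sigma$ to an involutive anti-holomorphic bundle endomorphism $\widehat{\sigma}$ of $\mathcal{L}_{\lambda}$, which is possible precisely because in scalar type $\mathcal{L}_{\lambda}$ is defined over the real form of $\p_{\C}^{+}$ fixed by $\sigma$.

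Granting (i)--(iii), the conclusion is the output of Kobayashi's theorem on multiplicity-freeness of $\mathcal{O}(D,\mathcal{L})$ via visible actions (the ``propagation of multiplicity-freeness'' engine used in \cite{kobayashi07}): the pair $(\sigma,\widehat{\sigma})$ yields a bijective conjugate-linear $H$-intertwining operator on $\mathcal{O}(D,\mathcal{L}_{\lambda})$, and a reproducing-kernel argument --- using precisely that $\sigma$ fixes $S$ pointwise and that $D=H\cdot S$ --- forces this operator to preserve every irreducible $H$-summand, so that the commutant of $H$ is commutative and $\pi|_{H}$ is multiplicity-free. (In the holomorphic-type case $\pi|_{H}$ is moreover discretely decomposable, so it is literally a Hilbert direct sum with all multiplicities equal to one; in the anti-holomorphic-type case a continuous spectrum appears, but commutativity of the commutant still expresses multiplicity-freeness.)

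The step I expect to be the real obstacle is (ii): producing a \emph{single} anti-holomorphic involution that fixes the slice pointwise and simultaneously normalizes \emph{every} $H$-orbit, uniformly over the classification of symmetric pairs. For $H=K$ this is classical --- plain complex conjugation on $\p_{\C}^{+}$ together with a ray-shaped slice, the $K$-orbits being distinguished by ``singular values'' which conjugation preserves --- but for noncompact $H$ one must exploit the Jordan-algebra structure of $D$ and the compatibility of $\tau$ with the strongly orthogonal root system, and, since $D/H$ is noncompact, verify with care both $D=H\cdot S$ and the orbit-wise invariance of $\sigma$. A secondary point is to phrase the abstract criterion so that it genuinely applies to a \emph{noncompact} acting group $H$: a bare conjugate-linear involutive intertwiner does not by itself force multiplicity-freeness (witness the regular representation of a nonabelian compact group), so the slice data $D=H\cdot S$ with $\sigma|_{S}=\mathrm{id}$ is essential rather than cosmetic.
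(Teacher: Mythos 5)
Your strategy is exactly the one the paper points to: Fact~\ref{F:mf} is quoted from \cite{kobayashi07}, and the paper explicitly states that its proof rests on the theory of visible actions (cf.\ Fact~\ref{fact:visible}), i.e.\ the strong visibility of the $H$-action on $G/K$ together with the anti-holomorphic involution fixing a slice, lifted to the line bundle, and the propagation-of-multiplicity-freeness theorem. Your sketch, including the caveats about noncompact $H$ and the role of the slice $D=H\cdot S$, faithfully reconstructs that cited argument, so it is the same approach rather than a new one.
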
 

See \cite[Thm.~A]{kobayashi97}
 for the proof based on the theory 
 of \lq{visible actions}\rq\
 on complex manifolds.    

\smallskip
Suppose $\tau$ is an involutive automorphism of Hermitian Lie group $G$ 
such that $\tau \theta = \theta \tau$. Then, $\tau$ stabilizes the center 
$\c(\k)$ of $\k$. Since $\dim \c(\k) = 1$ for a Hermitian Lie group $G$, 
$\tau|_{\c(\k)}$ is either 
$\operatorname{id}$ or $- \operatorname{id}$. 
On the other hand, since $\tau(K) = K$, $\tau$ also acts on 
$G/K$ as a diffeomorphism. 
This action is either holomorphic or anti-holomorphic 
according to $\tau|_{\c(\k)} = \operatorname{id}$ or 
$- \operatorname{id}$. 
\begin{definition}
\label{def:GHholo}
The involution $\tau$ (or the corresponding symmetric pair $(G, H)$) is 
said to be {\it holomorphic} or {\it anti-holomorphic}, 
 if $\tau|_{\c(\k)}={\operatorname{id}}$
 or $-{\operatorname{id}}$, 
respectively. 
\end{definition}
The Cartan involution $\theta$ is always of holomorphic type. 

\begin{exm}
Let $G = \Sp(n, \R)$. 
Then the pair $(G,H)$ is of holomorphic type if 
$H = \operatorname{U}(p,q)$ or  
$\Sp(p, \R) \times \Sp(q, \R) \quad (p+q =n)$, 
whereas it is  
of anti-holomorphic type if $H = \GL(n, \R))$. 
See \cite[Tables 3.4.1 and 3.4.2]{kobayashi07}
 for the list of all the irreducible symmetric pairs $(G,H)$
 on the Lie algebra level 
 that are of holomorphic and anti-holomorphic types. 
\end{exm}

In this article, 
 we shall treat the case where $\tau$ is of holomorphic type. 
This implies that the branching law \eqref{E:branch}
 of the restriction $\pi |_{H}$ does not contain
 any continuous spectrum, 
 and is discretely decomposable 
for any lowest weight module $\pi$ (\cite{kobayashi94,xkAnn98}). 

\smallskip
The first main result of this article is to give the counterpart of Fact 
\ref{F:mf} in terms of the Corwin--Greenleaf function \eqref{E:cgmf}
 in the geometry of coadjoint orbits.

\smallskip 
\begin{thmalph}
\label{thm:A}
Let $G$ be a Hermitian Lie group,
 $(G,H)$ a symmetric pair
 of holomorphic type,
 and $\mathcal{O}^{G}$ a coadjoint orbit
 in $\sqrt{-1} \g^{\ast}$
 of $G$ satisfying \eqref{E:OGZ}. Then 
$$ 
n(\mathcal{O}^{G}, \mathcal O^{H}) \le 1
$$
for any coadjoint orbit $\mathcal O^{H}$ in $\sqrt{-1}\h^*$.
\end{thmalph}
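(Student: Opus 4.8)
The plan is to realize $\mathcal{O}^G$ as the Hermitian symmetric space $G/K$ and to read $\mathcal{O}^G\cap\PR^{-1}(\mathcal{O}^H)$ as a level set of the moment map. If $\mathcal{O}^G=\{0\}$ there is nothing to prove, and since a Hermitian Lie group has a single Hermitian simple factor along which the whole problem splits, we may assume $G$ is simple. Fix a Cartan involution $\theta$ with $\tau\theta=\theta\tau$, write $\g=\h\oplus\q$ and $\g=\k\oplus\p$ for the eigenspace decompositions of $\tau$ and $\theta$, and let $Z_0\in\c(\k)$ generate the complex structure $J=\ad(Z_0)|_\p$; holomorphic type means $\tau Z_0=Z_0$, so $Z_0\in\h\cap\k$. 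Identifying $\sqrt{-1}\g^{\ast}\cong\sqrt{-1}\g$ via the Killing form, $\PR$ becomes the orthogonal projection $\g\to\h$ (kernel $\q$), and \eqref{E:OGZ} says $\mathcal{O}^G$ passes through $\sqrt{-1}cZ_0$ with $c\neq0$. As $Z_G(Z_0)=K$, this gives $\mathcal{O}^G\cong G/K$ with $gK\mapsto\sqrt{-1}c\,\Ad(g)Z_0$. Under this identification $\PR|_{\mathcal{O}^G}$ is exactly the moment map $\mu\colon G/K\to\sqrt{-1}\h^{\ast}$ for the Hamiltonian $H$-action, $\mathcal{O}^G\cap\PR^{-1}(\mathcal{O}^H)=\mu^{-1}(\mathcal{O}^H)$, and since $\mu$ is $H$-equivariant the bound $n(\mathcal{O}^G,\mathcal{O}^H)\le1$ for \emph{all} $\mathcal{O}^H$ is equivalent to: $\mu$ induces an injection $H\backslash(G/K)\to(\sqrt{-1}\h^{\ast})/H$.

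To parametrize $H\backslash(G/K)=H\backslash G/K$ I would use the generalized Cartan decomposition $G=H\exp(\a)K$ with $\a$ a maximal abelian subspace of $\q\cap\p$; it identifies $H\backslash G/K$ with $\a/\widetilde W$ for the little Weyl group $\widetilde W(\g;\a)$, a closed chamber $\overline{\a^{+}}$ being a fundamental domain. So it suffices to show $Y\mapsto\mu(\exp(Y)K)$ descends to an injection $\overline{\a^{+}}\hookrightarrow(\sqrt{-1}\h^{\ast})/H$. The crucial observation is a parity/$\cosh$ identity: for $Y\in\a\subset\q\cap\p$ one has $\Ad(\exp Y)Z_0=e^{\ad Y}Z_0$, and since $\ad(\p)$ exchanges $\k\leftrightarrow\p$ while $\ad(\q)$ exchanges $\h\leftrightarrow\q$, the even powers $\ad(Y)^{2n}Z_0$ lie in $\h\cap\k$ and the odd powers in $\q\cap\p$; hence
\[
\PR\bigl(\Ad(\exp Y)Z_0\bigr)=\cosh(\ad Y)Z_0\in\h\cap\k,
\]
so $\mu(\exp(Y)K)=\sqrt{-1}c\cosh(\ad Y)Z_0$ is an \emph{elliptic} element of $\sqrt{-1}\h^{\ast}$. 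Two elliptic elements $\sqrt{-1}X_1,\sqrt{-1}X_2$ with $X_i\in\h\cap\k$ are $\Ad(H)$-conjugate iff they are $\Ad(H\cap K)$-conjugate — using the Cartan decomposition $h=k_1(\exp Y')k_2$ in $H$ and that $\sinh(\ad Y')X=0$ forces $\ad(Y')X=0$ for $Y'\in\h\cap\p$ — hence iff they are $W(H\cap K)$-conjugate inside a maximal torus $\t$ of $\h\cap\k$. One can choose $\t$ containing $Z_0$ (indeed containing all the commuting elements appearing below, so that a single $\t$ works for every $Y\in\a$).

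The remaining and main point is that $\overline{\a^{+}}\to\t/W(H\cap K)$, $Y\mapsto[\,c\cosh(\ad Y)Z_0\,]$, is injective, and this is where the structure theory of the (finitely many) holomorphic-type symmetric pairs enters. I would use a system $\gamma_1,\dots,\gamma_\ell$ of strongly orthogonal noncompact roots adapted to $\a$, whose root vectors produce a basis $X_1,\dots,X_\ell$ of $\a$ for which the triples $(X_j,JX_j,[X_j,JX_j])$ span mutually commuting $\sL_2$-subalgebras. Setting $z_j:=-\tfrac14[X_j,JX_j]$ and $Z_0':=Z_0-\sum_j z_j$, one checks $[Z_0',\cdot]=0$ on each of these $\sL_2$'s, so the $\cosh$-expansion decouples: for $Y=\sum_j t_jX_j$,
\[
\cosh(\ad Y)Z_0=Z_0'+\sum_{j=1}^{\ell}\cosh(\kappa t_j)\,z_j
\]
with a fixed $\kappa>0$, all terms lying in $\t$. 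On the chamber $\overline{\a^{+}}$ the $\widetilde W$-orbit of $Y$ determines the multiset $\{\lvert t_j\rvert\}_j$ (the restricted system for these pairs contains all sign changes and permutations of the $t_j$), and $\cosh$ is strictly increasing on $[0,\infty)$; equivalently, the $\ad$-eigenvalues of the displayed element include the numbers $\pm\sqrt{-1}\cosh(\kappa t_j)$ on the planes $\p^{(j)}$, so the $\Ad(H)$-class recovers the ordered tuple $(t_1,\dots,t_\ell)$ and hence $Y$. This gives the injectivity and finishes the proof.

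The hard part is precisely this last step: supplying, uniformly or case by case along the classification, the adapted strongly orthogonal root system, and verifying both the decoupled $\cosh$-formula and that neither the $W(H\cap K)$-action nor the remaining (non-$\a$) $\ad$-eigenvalues can collapse two distinct chamber points. By contrast the reductions above — to $\mathcal{O}^G=G/K$, to a moment-map injectivity statement, the parity/$\cosh$ identity, and the passage to $W(H\cap K)$-conjugacy of elliptic elements — are formal and uniform. (When $H=K$ this endgame can be bypassed via the McDuff--Deltour symplectomorphism $\mathcal{O}^G\cong\p$ together with Paradan's computation, as recalled in the introduction.)
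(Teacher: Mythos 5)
Your proposal is correct and follows essentially the same route as the paper's proof: the Flensted--Jensen generalized Cartan decomposition $G=H\exp(\overline{\a^{+}})K$, the explicit formula $\PR\bigl(\Ad(\exp Y)Z_0\bigr)=Z_0+\sum_j(\sinh t_j)^2H_j$ (the paper's Lemma \ref{L:l1}, which is your $\cosh$ identity in a different normalization), its injectivity on the closed chamber via strict monotonicity, and the reduction of $\Ad(H)$-conjugacy of dominant elliptic elements to equality. The only point worth adding is that the ``hard part'' you flag requires no case-by-case work: since $\tau\theta$ is again of holomorphic type, $H^a/(H^a\cap K)$ with $H^a=(G^{\tau\theta})_0$ is a Hermitian symmetric subspace of $G/K$ whose $\p$-part is $\p^{-\tau}\supset\a$, so the Harish-Chandra--Kor\'anyi--Wolf strongly orthogonal roots in $\Delta(\p_+^{-\tau})$ and the type $C_r$/$BC_r$ restricted root system furnish your adapted $\sL_2$-triples, the decoupled formula, and the sign-change/permutation action uniformly.
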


\smallskip 
It should be noted
 that the Corwin--Greenleaf function $n ( \mathcal{O}^G, \, \mathcal{O}^H)$ may be infinite in general even for 
a symmetric pair $(G, H)$ if we drop the assumption \eqref{E:OGZ} 
(see \cite{k-n03} for such an example).  

Since $H$ is connected, Theorem A implies the following 
topological result: 

\noindent 
\begin{coralph}
The intersection 
$\mathcal{O}^G \cap \PR^{-1} (\mathcal{O}^H)$ is connected for any 
coadjoint orbit $\mathcal{O}^G$ in $\sqrt{-1} \g^*$ satisfying \eqref{E:OGZ} 
and for any coadjoint orbit $\mathcal{O}^H$ in $ \sqrt{-1} \h^*$. 
\end{coralph}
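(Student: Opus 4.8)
The plan is to deduce the statement directly from Theorem~\ref{thm:A} together with the standing hypothesis that $H$ is connected. First I would record that the set $\mathcal{O}^G \cap \PR^{-1}(\mathcal{O}^H)$ is stable under the coadjoint action of $H$: the orbit $\mathcal{O}^G$ is $G$-stable and hence a fortiori $H$-stable, while the projection $\PR \colon \sqrt{-1}\g^* \to \sqrt{-1}\h^*$ is $H$-equivariant, being dual to the $H$-equivariant inclusion $\h \subset \g$, so $\PR^{-1}(\mathcal{O}^H)$ is $H$-stable because $\mathcal{O}^H$ is an $H$-orbit. Consequently the intersection decomposes as a disjoint union of $H$-orbits, and by the definition \eqref{E:cgmf} the number of these orbits is precisely $n(\mathcal{O}^G, \mathcal{O}^H)$.

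Next I would invoke Theorem~\ref{thm:A}, which gives $n(\mathcal{O}^G, \mathcal{O}^H) \le 1$ under the hypothesis \eqref{E:OGZ}. If $n(\mathcal{O}^G, \mathcal{O}^H) = 0$ the intersection is empty, and there is nothing to prove; if $n(\mathcal{O}^G, \mathcal{O}^H) = 1$, then $\mathcal{O}^G \cap \PR^{-1}(\mathcal{O}^H)$ coincides with a single $H$-orbit $H \cdot \lambda$ for any $\lambda$ in the intersection. The orbit map $H \to H \cdot \lambda$, $h \mapsto \Ad^{\ast}(h)\lambda$, is continuous and surjective, so $H \cdot \lambda$ is the continuous image of the connected topological group $H$ and is therefore connected. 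I do not expect a genuine obstacle here: once Theorem~\ref{thm:A} is available, the corollary is immediate, and the entire mathematical content lies in that theorem. The only minor points worth flagging are that the argument genuinely uses the connectedness of $H$ (without it the intersection could be a finite union of orbit pieces), and that the empty intersection should be handled as the trivial case according to whichever convention on connectedness of the empty set one adopts.
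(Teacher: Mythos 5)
Your argument is correct and is exactly the one the paper intends: the intersection is $H$-stable, hence a union of $n(\mathcal{O}^G,\mathcal{O}^H)\le 1$ orbits of the connected group $H$, each of which is connected as a continuous image of $H$. The paper states this corollary with only the remark ``Since $H$ is connected, Theorem A implies\dots'', so your write-up simply makes the same deduction explicit.
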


\smallskip
In the special case $H=K$, 
the connectedness of the intersection
 $\mathcal{O}^G \cap \PR^{-1} (\mathcal{O}^H)$  
could be derived also from a general theorem 
concerning Hamiltonian actions of connected compact group on symplectic 
manifolds with proper moment maps (see \cite{xParadan2015, sj}). 
On the other hand, 
$\mathcal{O}^G \cap \PR^{-1} (\mathcal{O}^H)$ can be disconnected 
when the momentum map $\mu \colon \mathcal{O}^G \to \sqrt{-1}\h^{\ast}$
 is not proper
 (see \cite[Fig.~4.6]{k-n03} for such an example). 

\subsection{Nonvanishing condition 
 for the Corwin--Greenleaf function} \label{SS:s2} 

The second main result of this article is
 a necessary and sufficient condition 
 for the Corwin--Greenleaf function 
$n ( \mathcal{O}^G, \, \mathcal{O}^H)$
 to be nonzero. 
In order to establish it, we need to 
fix a parametrization of $\mathcal{O}^G$ and $\mathcal{O}^H$. 

\smallskip
Suppose $G$ is a simple Lie group of Hermitian type.  
Then the center ${\mathfrak{c}}(\k)$ of $\k$
 is one-dimensional,
 and there exists a characteristic 
element $Z \in \sqrt{-1}\c(\k)$ such that 
\begin{equation} \label{E:kp}
\g_{\C} := \g \otimes_{\R} \C = \k_{\C} + \p_{+} + \p_{-} 
\end{equation}
is the direct sum decomposition of the eigenspaces
 of $\ad (Z)$ with eigenvalues 
$0$, $+ 1$ and $-1$, respectively.
Then $G/K$ carries a $G$-invariant complex structure
 with holomorphic tangent bundle
 $G \times_K \p_+ \to G/K$.  
 
Suppose $\tau$ is an involutive automorphism of $G$ commuting with the Cartan 
involution $\theta$. 
We use the same letters $\tau$, $\theta$ to denote the 
complex linear extensions of their differentials. 
We take a maximal abelian subspace of $\k^{\tau} = \h \cap \k$, 
 and extend it to a maximal abelian subspace $\t$ of $\k$. 
Then $\t^{\tau}=\h \cap \t$ is a Cartan subalgebra
 of $\k^{\tau}$.  
Let $\Delta(\k,\t)$ ($\subset \sqrt{-1} \t^{\ast}$)
 and $\Delta(\k^{\tau},\t^{\tau})$ ($\subset \sqrt{-1} (\t^{\tau})^{\ast}$)
 be the root systems
 of the pair $(\k_{\mathbb{C}}, \t_{\mathbb{C}})$
 and $(\k_{\mathbb{C}}^{\tau}, \t_{\mathbb{C}}^{\tau})$, 
 respectively.  
Then we can 
choose compatible positive systems 
$\Delta^{+} (\k, \, \t)$ and $\Delta^{+} (\k^{\tau}, \, \t^{\tau})$ 
in the sense that 
\begin{equation}\label{E:compa}
\alpha |_{\t^{\tau}} \in \Delta^{+} (\k^{\tau}, \, \t^{\tau}) \quad 
\text{for any } \alpha \in \Delta^{+} (\k, \, \t).
\end{equation} 

We write $\sqrt{-1} (\t^{*})_{+}$ for the dominant Weyl chamber
 with respect to the positive system $\Delta^{+} (\k, \, \t)$, and 
$\sqrt{-1} (\t^{\tau})_{+}^*$ to $\Delta^{+} (\k^{\tau}, \, \t^{\tau})$.

Hereafter we assume
 that $\tau$ is of holomorphic type (Definition \ref{def:GHholo}). 
Since $\tau Z = Z$, the direct sum 
decomposition \eqref{E:kp} is stable under $\tau$. 
Thus we have a direct sum decomposition 
$$\p_{+} = \p_{+}^{\tau} + \p_{+}^{-\tau},$$ 
where we set
$$\p_{+}^{\pm\tau} := \{ X \in \p_{+} : \tau X = \pm X \, \}.$$ 

For a $\t^{\tau}$-stable subspace $F$ in $\p_+$, 
 let $\Delta(F)$ denote the set of weights of $F$ with 
respect to $\t^{\tau}$. It is a finite set in $\sqrt{-1} (\t^{\tau})^{*}$.

\smallskip
The subgroup $G^{\tau\theta}$ is locally isomorphic to the direct product
 of a compact normal subgroup $G^{(0)}$
 and noncompact simple Lie subgroups $G^{(i)}$
 ($1 \le i \le L$).  
Correspondingly,
 the Lie algebra $\g^{\tau\theta}$ is decomposed into the direct sum:
\begin{equation}
\label{eqn:Hdeco}
   \g^{\tau\theta} 
  \simeq \g^{(0)} \oplus \g^{(1)} \oplus \cdots \oplus \g^{(L)}.  
\end{equation}
\begin{remark}
\label{rem:L}
By the classification \cite[Table 3.4.1]{kobayashi07}, 
 we see 
\[
  L = 1\quad\text{or}\quad 2.  
\]
For example,
 when $G=\Sp(p+q,\R)$, 
 $L=1$
 if $H={\operatorname{U}}(p,q)$
 and $L=2$
 if $H=\Sp(p,\R) \times \Sp(q,\R)$.  
\end{remark}

Two roots $\alpha$ and $\beta$ are called {\it strongly orthogonal} if neither 
$\alpha + \beta$ nor $\alpha - \beta$ is a root. 
For each $i$ ($1 \le i \le L$), 
 we denote by $r_i$ ($\ge 1$)
 the real rank of $\g^{(i)}$, 
 and take a maximal set of 
strongly orthogonal roots 
$\{\nu_1^{(i)}, \, \cdots, \, \nu_{r_i}^{(i)}\}$ in 
 $\Delta(\p_{+} \cap \g^{(i)})$ $(\subset \sqrt{-1}(\t^{\tau})^{\ast})$
 such that 
\begin{enumerate}
\item $\nu_1^{(i)}$ is the highest in $\Delta(\p_{+}\cap \g^{(i)})$. 
\item $\nu_k^{(i)}$ is the highest in the set of all $\nu$ in 
$\Delta(\p_{+} \cap \g^{(i)})$ such that $\nu$ is strongly orthogonal to 
$\nu_1^{(i)}, \cdots, \nu_{k-1}^{(i)} \; (2 \le k \le r_i)$.
\end{enumerate} 

We note that the split rank of the semisimple 
symmetric space $G/H$ equals the real rank of $G^{\tau\theta}$, 
 which is given by 
 $r:=r_1 + \cdots + r_L$. 

\medskip
We set, 
 for $1 \le i \le L$, 
\begin{align*}
   C_+^{(i)}&:=\{(t_j^{(i)})_{1 \le j \le r_i}\in {\mathbb{R}}^{r_i}
                :
                t_1^{(i)} \ge \cdots \ge t_{r_i}^{(i)} \ge 0\}, 
\\
   \Lambda^{(i)}&:= C_+^{(i)} \cap {\mathbb{Z}}^{r_i}, 
\end{align*}
 and define a closed convex cone in $\sqrt{-1}(\t^{\tau})^*$ by 
\begin{equation} \label{E:cone}
   \operatorname{Cone} (\p_{+}^{-\tau}) 
:= \{ \sum_{i=1}^L \sum_{j=1}^{r_i} t_j^{(i)} \nu_j^{(i)} : 
   (t_j^{(i)})_{1 \le j \le r_i} \in C_+^{(i)}
   \quad \text{for all }i\,\,(1 \le i \le L)\}.
\end{equation}

\smallskip
By using the Killing form, we identify $\sqrt{-1} \g$ with 
$\sqrt{-1} \g^*$, and regard 
$$
\sqrt{-1} \c( \k)^* \subset \sqrt{-1} (\t^{\tau})^{*} 
\subset \sqrt{-1} \t^* \subset \sqrt{-1} \k^* \subset \sqrt{-1} \g^*
$$ 
corresponding to the inclusion 
$\c(\k) \subset \t^{\tau} \subset  \t \subset \k \subset \g$. Via the 
identification $\sqrt{-1} \g^* \simeq \sqrt{-1} \g$, the condition 
\eqref{E:OGZ} for a coadjoint orbit $\mathcal{O}^G$ is equivalent to the 
condition 
$$
\mathcal{O}^G \cap \sqrt{-1} \c(\k) \ne \emptyset
$$ 
for an adjoint orbit $\mathcal{O}^G$
 (by abuse of notation)
 because $\sqrt{-1}([\k,\k] + \p)^{\perp} = \sqrt{-1} \c(\k)^{\ast}$. 

\smallskip
A coadjoint orbit $\mathcal{O}^G $ is said to be an {\it elliptic orbit}
 if 
$\mathcal{O}^G \cap \sqrt{-1} \k^* \ne \emptyset$. 
In particular,
 $\mathcal{O}^G$ is elliptic
 if \eqref{E:OGZ} is satisfied.  
If $\mathcal{O}^G$ is an elliptic coadjoint orbit,
 then $\mathcal{O}^G $ 
meets at a single point, say $\mu$, in the dominant Weyl chamber 
$\sqrt{-1} (\t^*)_{+}$ with 
respect to $\Delta^{+} (\k, \, \t)$. 
We shall write $\mathcal{O}^{G}_{\mu}$ for $\mathcal{O}^G$
 if $\mathcal{O}^G \cap \sqrt{-1} (\t^{\ast})_+=\{\mu\}$. 
Likewise, an elliptic coadjoint orbit $\mathcal{O}^H$ 
is written as $\mathcal{O}^{H}_{\mu} = \Ad^{*}(H) \mu $ for some dominant 
element $\mu \in \sqrt{-1} (\t^{\tau})^{*}_{+}$.

\smallskip 
The coadjoint orbit $\mathcal{O}^G$ satisfying \eqref{E:OGZ} is a special case 
of elliptic orbits. In this case, $\mathcal{O}^G$ is of the form 
$\mathcal{O}^G_{\lambda}$ for some $\lambda \in \sqrt{-1} \c (\k)^*$. 
If $\lambda \ne 0$ then we have either 
\begin{equation} \label{E:pos} 
\langle \lambda,  \beta \rangle > 0 \text{ for any } \beta \in 
\Delta (\p_{+}) 
\end{equation}
or 
$$\langle \lambda,  \beta \rangle < 0 \text{ for any } \beta \in 
\Delta (\p_{+}).$$

Without loss of generality,
 we may and do assume that the condition \eqref{E:pos} is satisfied.

We shall see in Proposition \ref{P:p2} below
 that if $\mathcal{O}^G$ satisfies 
\eqref{E:OGZ} and $\mathcal O^{H}$ is a coadjoint orbit in 
$\sqrt{-1} \h^*$ such that 
$ n(\mathcal{O}^{G}_{\lambda}, \mathcal O^{H}) \ne 0$ then $\mathcal O^{H}$ 
must be an elliptic orbit, equivalently, $\mathcal O^{H}$ is of the form 
$\mathcal O^{H}_{\mu} = \Ad^{*} (H) \mu$ for some 
$\mu \in \sqrt{-1} (\t^{\tau})^*_{+}$.  
Then we determine elliptic coadjoint orbits ${\mathcal{O}}^H$
 with $n(\mathcal{O}_{\lambda}^G, \mathcal{O}^H) \ne 0$
 as follows:

\begin{thmalph} \label{T:elliptic} 
Let $G$ be a Hermitian Lie group,
 and $(G,H)$ a symmetric pair
 of holomorphic type.  
Suppose $\mathcal{O}^{G}_{\lambda}=\Ad^\ast(G)\lambda$
 with $\lambda$ satisfying \eqref{E:pos}.  
Then the following three conditions on 
 $\mu \in \sqrt{-1} (\t^{\tau})^*_{+}$ are equivalent:

\begin{enumerate}
\item[{\rm{(i)}}] $ n(\mathcal{O}^{G}_{\lambda}, \mathcal O^{H}_{\mu}) \ne 0$;
\item[{\rm{(ii)}}] $ n(\mathcal{O}^{G}_{\lambda}, \mathcal O^{H}_{\mu}) = 1$;
\item[{\rm{(iii)}}] $\mu \in \lambda + \operatorname{Cone} (\p_{+}^{-\tau})$.
\end{enumerate}
\end{thmalph}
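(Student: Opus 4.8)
The plan is to reduce the computation of $n(\mathcal{O}^G_\lambda,\mathcal{O}^H_\mu)$ to a concrete description of the intersection $\mathcal{O}^G_\lambda\cap\PR^{-1}(\mathcal{O}^H_\mu)$ and then to an explicit $H$-orbit count. First I would use the isomorphism $\mathcal{O}^G_\lambda\cong G/K$ (valid since $\lambda\in\sqrt{-1}\c(\k)^\ast$, $\lambda\ne 0$, by the remark after \eqref{E:OGZ}) to realize $\mathcal{O}^G_\lambda$ as the Hermitian symmetric space, and the momentum map $\mu\colon\mathcal{O}^G_\lambda\to\sqrt{-1}\h^\ast$ for the $H$-action explicitly in the Harish-Chandra coordinates on the bounded domain. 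Under the holomorphic-type hypothesis, $\tau$ fixes $Z$, so $\p_+=\p_+^\tau\oplus\p_+^{-\tau}$ is $\tau$-stable and the $H$-momentum map is, up to scaling by $\lambda$, essentially a quadratic moment map in these coordinates; its image is governed by the $\t^\tau$-weights of $\p_+$, which is exactly where $\operatorname{Cone}(\p_+^{-\tau})$ and the strongly orthogonal roots $\nu_j^{(i)}$ enter.

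The key steps, in order: (1) Fix $\lambda$ satisfying \eqref{E:pos} and identify $\mathcal{O}^G_\lambda\cap\sqrt{-1}\t^\ast_+=\{\lambda\}$, reducing a point of $\mathcal{O}^G_\lambda\cap\PR^{-1}(\mathcal{O}^H_\mu)$ to an element $\xi\in\mathcal{O}^G_\lambda$ whose $\h$-projection lies in $\Ad^\ast(H)\mu$; after conjugating by $H$ we may assume $\PR(\xi)=\mu\in\sqrt{-1}(\t^\tau)^\ast_+$. (2) Write $\xi$ in terms of the Harish-Chandra embedding: points of $G/K$ correspond to $X\in\p_+$ (bounded domain), and the $\k^\tau$-component of the momentum value is a ($\lambda$-weighted) sum over $\Delta(\p_+)$ of $|X|^2$-type terms; the $\tau$-anti-invariant part $\p_+^{-\tau}$ contributes to the $\sqrt{-1}\c(\k^\tau)$-direction and the strongly-orthogonal-root normal form for the $K^\tau$-action on $\p_+^{-\tau}$ (a restricted-root/polar decomposition, as in the Cartan decomposition of the symmetric space $G^{\tau\theta}/K^\tau$) lets one normalize $X$ to $\sum_{i,j}t_j^{(i)}X_{\nu_j^{(i)}}$ with $t_j^{(i)}\ge0$. (3) Compute $\PR(\xi)$ for such a normalized $X$: it equals $\lambda+\sum_{i,j} f(t_j^{(i)})\,\nu_j^{(i)}$ for an explicit strictly increasing bijection $f\colon[0,\infty)\to[0,\infty)$ (coming from the Cayley-type parametrization of $G/K$), so the condition $\PR(\xi)=\mu$ is solvable \emph{if and only if} $\mu-\lambda\in\operatorname{Cone}(\p_+^{-\tau})$, and when solvable the $(t_j^{(i)})$ are \emph{uniquely} determined. (4) Finally show that the fiber of $\{\xi\in\mathcal{O}^G_\lambda:\PR(\xi)=\mu\}$ over this unique normalized representative is a single $\operatorname{Stab}_H(\mu)$-orbit — i.e. the residual stabilizer $Z_H(\mu)\supset K^\tau$-type group acts transitively on it — which gives $n=1$; combined with (3) this proves (i)$\Leftrightarrow$(iii) and (ii)$\Leftrightarrow$(iii), while (ii)$\Rightarrow$(i) is trivial. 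Alternatively one may deduce (i)$\Leftrightarrow$(iii) directly from Theorem A (which already gives $n\le 1$, hence (i)$\Leftrightarrow$(ii)) plus the nonemptiness/momentum-image computation, so the real content is the description of $\operatorname{Image}(\mu\colon\mathcal{O}^G_\lambda\to\sqrt{-1}\h^\ast)$ restricted to the Weyl chamber.

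The main obstacle I expect is step (3)–(4): pinning down the exact momentum-map formula in the strongly-orthogonal-root coordinates so that the image is \emph{precisely} $\lambda+\operatorname{Cone}(\p_+^{-\tau})$ with no extra roots appearing, and verifying transitivity of the centralizer action on each fiber. This requires knowing that the only $\t^\tau$-weights of $\p_+$ that survive in the $H$-momentum image (beyond the central $\lambda$-shift) are nonnegative combinations of the $\nu_j^{(i)}$ — which hinges on the fact that $\p_+^\tau$ contributes only to the $\h\cap\k$-Weyl-chamber-interior in a way that, after $H$-conjugation, can be absorbed — together with the $L=1$ or $2$ block structure \eqref{eqn:Hdeco} and the compatibility \eqref{E:compa} of positive systems. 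The matching with the branching law Fact~\ref{F:mf} / Fact~\ref{F:f1} (and the known support $\operatorname{Supp}_H(\pi^G|_H)=\lambda+\operatorname{Cone}(\p_+^{-\tau})$-type description from \cite{kobayashi97, kobayashi07}) serves as a consistency check and suggests the correct normalization of $f$.
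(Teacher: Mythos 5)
Your outline follows the same overall strategy as the paper's proof---reduce $\mathcal{O}^G_{\lambda}$ modulo $H$ to an $r$-parameter slice indexed by the strongly orthogonal roots, compute the projection on that slice explicitly as $\lambda+\sum_{i,j} f(t_j^{(i)})\,\nu_j^{(i)}$ with $f$ a strictly increasing bijection of $[0,\infty)$ (in the paper $f(t)=c\,(\sinh t)^2$, Lemma \ref{L:l1}), and obtain multiplicity one from Theorem \ref{thm:A}. But there is a genuine gap precisely at the reduction-to-slice step on which everything else rests. The ``strongly-orthogonal-root normal form for the $K^{\tau}$-action on $\p_+^{-\tau}$'' only normalizes elements that already lie in $\p_+^{-\tau}$ (it is the Cartan decomposition of $H^a/(H^a\cap K)$); a general point of $\mathcal{O}^G_{\lambda}\cong G/K$ corresponds in Harish--Chandra coordinates to $X\in\p_+=\p_+^{\tau}\oplus\p_+^{-\tau}$ with a nonzero $\p_+^{\tau}$-component, and no compact-group (linear polar) argument removes that component. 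You flag this difficulty yourself (``$\p_+^{\tau}$ \dots can be absorbed'' after $H$-conjugation) but supply no mechanism, and your ordering of steps makes it worse: after first normalizing $\PR(\xi)=\mu$ by $H$ you only have $\operatorname{Stab}_H(\mu)$ left to act with. The missing ingredient is the generalized Cartan decomposition $G=HA_+K$ for the symmetric pair $(G,H)$ \eqref{E:car-decomp} (Flensted-Jensen), with $\a$ maximal abelian in $\g^{-\tau}\cap\g^{-\theta}$: since $K$ fixes $\lambda$, it yields $\mathcal{O}^G_{\lambda}=\Ad(H)\Ad(A_+)\lambda$ in one stroke, so the whole momentum image is $\Ad^*(H)$ applied to $\PR(\Ad(A_+)\lambda)$, and in particular no weights from $\Delta(\p_+^{\tau})$ can survive. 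This is the one nontrivial structural theorem the proof cannot do without.

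Once that is supplied, the rest goes through essentially as you describe and as in the paper: $\Ad(A_+)\lambda\subset\sqrt{-1}\,\g^{\tau\theta}$ because $\a\subset\g^{\tau\theta}$ and $\tau\lambda=\lambda$, so on this set $\PR^{\tau}=\PR^{\theta}=\tfrac12(\id+\theta)$ and Lemma \ref{L:l1} gives
\begin{equation*}
\PR\bigl(\Ad(A_+)\lambda\bigr)=\lambda+\operatorname{Cone}(\p_+^{-\tau}),
\end{equation*}
with the map $A_+\to\sqrt{-1}(\t^{\tau})^*_+$ injective (Propositions \ref{P:p1} and \ref{prop:l2}); this is exactly your step (3) with $f=c\,\sinh^2$. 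Your step (4) (transitivity of $\operatorname{Stab}_H(\mu)$ on the fiber) is both harder than necessary and unproved as stated; the efficient route, which you mention as an alternative and which the paper takes, is to get (i) $\Leftrightarrow$ (ii) directly from Theorem \ref{thm:A} and to reduce (i) $\Leftrightarrow$ (iii) to the equality $\mathcal{O}^H_{\mu}\cap\bigl(\lambda+\operatorname{Cone}(\p_+^{-\tau})\bigr)\ne\emptyset$ $\Leftrightarrow$ $\mu\in\lambda+\operatorname{Cone}(\p_+^{-\tau})$, which holds because $\lambda+\operatorname{Cone}(\p_+^{-\tau})$ lies in the closed dominant chamber $\sqrt{-1}(\t^{\tau})^*_+$ and an elliptic $H$-orbit meets that chamber in exactly one point.
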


The restriction of the projection 
 $\PR \colon \sqrt{-1} \g^{\ast} \to \sqrt{-1} \h^{\ast}$
 to a coadjoint orbit ${\mathcal{O}}^G$ is identified
 with the momentum map
 $\mu \colon {\mathcal{O}}^G \to \sqrt{-1} \h^{\ast}$
 for the Hamiltonian action
 on the symplectic manifold ${\mathcal{O}}^G$.  
Then the following corollary is deduced
 readily from Theorem \ref{T:elliptic}.  
\begin{cor}
\label{cor:proper}
Let $G$ be a Hermitian Lie group, 
 and $(G,H)$ a symmetric pair of holomorphic type.  
Suppose ${\mathcal{O}}^G$ is a coadjoint orbit
 satisfying \eqref{E:pos}.  
Then the momentum map
\[
  \mu \colon {\mathcal{O}}^G \to \sqrt{-1} \h^{\ast}
\]
is proper.  
\end{cor}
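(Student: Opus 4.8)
The plan is to deduce Corollary \ref{cor:proper} directly from Theorem \ref{T:elliptic}, using the identification of $\PR|_{\mathcal{O}^G}$ with the momentum map $\mu$. Recall that a continuous map is proper if and only if the preimage of every compact set is compact; since $\sqrt{-1}\h^\ast$ is a finite-dimensional vector space, it suffices to check that $\mu^{-1}(C)$ is compact for every closed ball $C$ in $\sqrt{-1}\h^\ast$.

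First I would reduce the problem to elliptic orbits on the target side. By Proposition \ref{P:p2} (cited in the excerpt), whenever $n(\mathcal{O}^G_\lambda, \mathcal{O}^H)\neq 0$ the orbit $\mathcal{O}^H$ must be elliptic, i.e. of the form $\mathcal{O}^H_\mu$ with $\mu\in\sqrt{-1}(\t^\tau)^\ast_+$. Equivalently, $\mu(\mathcal{O}^G) = \PR(\mathcal{O}^G)$ meets only the elliptic part of $\sqrt{-1}\h^\ast$, and a point $\xi\in\sqrt{-1}\h^\ast$ lies in the image exactly when the unique dominant elliptic representative of $\Ad^\ast(H)\xi$ (if it exists) lies in $\lambda+\operatorname{Cone}(\p_+^{-\tau})$, by Theorem \ref{T:elliptic}. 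In particular the image of $\mu$ is $\Ad^\ast(H)\bigl(\lambda+\operatorname{Cone}(\p_+^{-\tau})\bigr)$, which is the content of Theorem \ref{thm:Cprime}; I would record this as Theorem \ref{thm:Cprime} and use it freely.

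Next comes the core estimate. Since $\h=\h\cap\k + \h\cap\p$ is a Cartan decomposition of the reductive Lie algebra $\h$, every elliptic coadjoint orbit $\mathcal{O}^H_\mu$ is a closed subset of $\sqrt{-1}\h^\ast$, and for a compact set $C\subset\sqrt{-1}\h^\ast$ the set of dominant $\mu\in\sqrt{-1}(\t^\tau)^\ast_+$ with $\mathcal{O}^H_\mu\cap C\neq\emptyset$ is itself compact (the norm $\|\mu\|$ is bounded on $C$ by $\Ad^\ast(H)$-invariance of the norm). Intersecting with the closed cone $\lambda+\operatorname{Cone}(\p_+^{-\tau})$ keeps it compact; call this compact set of dominant parameters $M_C$. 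Then $\mu^{-1}(C)\subset \bigcup_{\mu\in M_C}\bigl(\mathcal{O}^G_\lambda\cap\PR^{-1}(\mathcal{O}^H_\mu)\bigr)$. By Theorem \ref{T:elliptic} each intersection $\mathcal{O}^G_\lambda\cap\PR^{-1}(\mathcal{O}^H_\mu)$ consists of a single $H$-orbit; one then shows (this is where I expect to spend the most effort) that this $H$-orbit is compact — indeed it is the orbit of a point whose $K$-component is controlled by $\mu$ and whose $\p$-component is constrained by $\lambda$ and $\mu$, so it sits inside a fixed compact elliptic orbit of $G$, on which $H$ acts with closed orbits. Finally, $\mu^{-1}(C)$ is a closed subset of $\mathcal{O}^G_\lambda$ contained in the union over the compact parameter set $M_C$ of these compact $H$-orbits; a short argument (continuity of the fibers in $\mu$, or exhibiting $\mu^{-1}(C)$ as a closed subset of a single compact set such as $\mathcal{O}^G_\lambda\cap\PR^{-1}\bigl(\Ad^\ast(H)M_C\bigr)$ with the latter compact) shows $\mu^{-1}(C)$ is compact.

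The main obstacle is the uniformity in the parameter $\mu$: Theorem \ref{T:elliptic} is stated pointwise, so I must upgrade it to the statement that $\bigcup_{\mu\in M_C}\bigl(\mathcal{O}^G_\lambda\cap\PR^{-1}(\mathcal{O}^H_\mu)\bigr)$ is compact whenever $M_C$ is. The cleanest route is probably to avoid this by arguing on the image side: $\operatorname{Image}(\mu)=\Ad^\ast(H)\bigl(\lambda+\operatorname{Cone}(\p_+^{-\tau})\bigr)$ is closed in $\sqrt{-1}\h^\ast$ (Theorem \ref{thm:Cprime}), hence $\mu^{-1}(C)=\mu^{-1}\bigl(C\cap\operatorname{Image}(\mu)\bigr)$ where $C\cap\operatorname{Image}(\mu)$ is compact; then one uses that $\mathcal{O}^G_\lambda$ itself, being an elliptic orbit satisfying \eqref{E:OGZ}, is $G$-equivariantly isomorphic to the Hermitian symmetric space $G/K$, and that $\PR$ is proper on it because the fibers of $G/K\to\sqrt{-1}\h^\ast$ over $\h\cap\p$-directions are forced to be bounded by the positivity condition \eqref{E:pos} — this is exactly the mechanism that makes $\pi^G|_H$ discretely decomposable (\cite{xkAnn98}), and I would make the geometric counterpart of that argument precise here.
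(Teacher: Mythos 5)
Your outer reduction is fine in outcome but not in justification: there is no $\Ad^{\ast}(H)$-invariant norm on $\sqrt{-1}\h^{\ast}$ for noncompact $H$, so ``the norm is bounded on $C$ by invariance'' is not an argument; what is true (and suffices to make your set $M_C$ compact) is that on a closed adjoint orbit of a reductive group the norm attains its minimum at the dominant elliptic representative. The genuine gap is your central compactness claim. The fiber $\mathcal{O}^G_{\lambda}\cap\PR^{-1}(\mathcal{O}^H_{\mu})$ is, by Theorems \ref{thm:A} and \ref{T:elliptic}, the single $H$-orbit $\Ad(H)\Ad(a)\lambda\simeq H/(H\cap aKa^{-1})$ (the isotropy is compact because $Z_G(\Ad(a)\lambda)=aKa^{-1}$), and this orbit is \emph{noncompact} whenever $H$ is noncompact: for $\mu=\lambda|_{\t^{\tau}}$ it is $H/(H\cap K)$, the Hermitian symmetric space of $H$ itself. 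Consequently $\bigcup_{\mu\in M_C}\bigl(\mathcal{O}^G_{\lambda}\cap\PR^{-1}(\mathcal{O}^H_{\mu})\bigr)$ is noncompact, and the inclusion of $\PR^{-1}(C)\cap\mathcal{O}^G_{\lambda}$ into this union proves nothing; the entire content of properness is to bound how far along each of these noncompact fibers a point can travel while its image stays in $C$, which neither your main argument nor the ``image-side'' fallback supplies (closedness of the image together with $\mathcal{O}^G\simeq G/K$ does not imply properness, and your final sentence essentially restates the claim to be proved).

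The paper gives no written proof beyond ``deduced readily from Theorem \ref{T:elliptic}'', but the mechanism it relies on is the structure of that theorem's proof, and the missing step in your note is the following. Write $x=\Ad(h)\Ad(a)\lambda$ via $G=HA_+K$, so that $\PR(x)=\Ad(h)\bigl(\lambda+v(a)\bigr)$ with $v(a)=c\sum_{i,j}(\sinh t_j^{(i)})^2H_j^{(i)}$ by Lemma \ref{L:l1}. If $\PR(x)\in C$ then, as you argue, $a$ is confined to a compact subset of $A_+$. One must then observe that the isotropy group $Z_H(\lambda+v(a))$ is \emph{compact}: indeed $\langle\lambda+v(a),\beta\rangle\ge c>0$ for every $\beta\in\Delta(\p_+^{\tau})$, since $\langle Z,\beta\rangle=1$ and $\langle\nu_j^{(i)},\beta\rangle\ge0$ (the sum of two noncompact positive roots is never a root). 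Because the orbit $\Ad(H)(\lambda+v(a))$ is closed with proper orbit map $H/Z_H(\lambda+v(a))\to\sqrt{-1}\h^{\ast}$, and $H\cap aKa^{-1}\subset Z_H(\Ad(a)\lambda)\subset Z_H(\lambda+v(a))$ with the latter compact, the condition $\Ad(h)(\lambda+v(a))\in C$ confines $h$ to a compact set modulo $H\cap aKa^{-1}$, uniformly for $a$ in a compact set; this is what actually bounds $x$ and yields properness.
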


The representation-theoretic counterpart for Theorem \ref{T:elliptic}
 is branching laws of scalar holomorphic discrete series representations $\pi^G(\lambda)$ 
 with respect to symmetric pairs $(G, H)$
 of holomorphic type, 
 and that for Corollary \ref{cor:proper} is discrete decomposability
 of the restriction $\pi^G(\lambda)|_H$. 
To describe the branching law explicitly, 
 we fix some notation.  
A holomorphic discrete series representation of $G$ 
 is parametrized by its minimal $K$-type.  
We denote by $\pi^G(\lambda)$ 
 if its minimal $K$-type has highest weight $\lambda$
 with respect to $\Delta^+(\k_{\mathbb{C}}, \t_{\mathbb{C}})$, 
 see also \cite[Sect.~8.1]{kobayashi07}
 for the convention
 when $G$ is not simple.  
Similar notation is applied to holomorphic discrete series representations
 $\pi^H({\mu})$ of $H$. 

\begin{fact} 
[Hua--Kostant--Schimd--Kobayashi, \cite{kobayashi97}]
\label{F:f1}
Suppose $(G, H)$ is a symmetric pair of holomorphic type. 
Assume $\lambda \in \sqrt{-1} \c( \k)^*$ satisfies \eqref{E:pos}. 
Then the restriction of $\pi^G({\lambda})$ to $H$ is decomposed 
 into a multiplicity-free direct sum of irreducible representation of $H$: 
\begin{equation}
\label{eqn:HKSK}
\pi^G({\lambda})|_{H} \simeq  
{\sum}
{}^{\raise 3pt \hbox{\kern-0.3pt$\oplus$}}
\pi^H(\lambda|_{\t^{\tau}} 
   + \sum_{i=1}^L \sum_{j=1}^{r_i}  a_j^{(i)} \nu_{j}^{(i)}), 
\end{equation}
where the sum is taken over the following countable set:
\begin{equation}
\label{eqn:aij}
  (a_j^{(i)})_{1 \le j \le r_i}\in \Lambda^{(i)}
  \quad
  (1 \le i \le L).  
\end{equation}
\end{fact}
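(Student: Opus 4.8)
\medskip
\noindent\emph{Sketch of proof.}
The plan is to pass to Harish-Chandra modules, split the symmetric algebra $S(\p_+)$ along $\p_+=\p_+^\tau\oplus\p_+^{-\tau}$, recognise the $\p_+^\tau$-part as the Harish-Chandra module of a scalar-type lowest weight representation of $H$ and feed the $\p_+^{-\tau}$-part into the classical Hua--Kostant--Schmid decomposition for the Hermitian Lie algebra $\g^{\tau\theta}$, and finally reassemble the branching law by matching $\k^\tau$-characters, using the multiplicity-freeness (Fact~\ref{F:mf}) and the discrete decomposability of $\pi^G(\lambda)|_H$ (\cite{kobayashi94, xkAnn98}).

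Since $\lambda$ satisfies \eqref{E:pos} and $\pi^G(\lambda)$ is of scalar type, its underlying $(\g,K)$-module is the irreducible generalized Verma module $N(\lambda):=U(\g_\C)\otimes_{U(\k_\C+\p_-)}\C_\lambda$, which by the Poincar\'e--Birkhoff--Witt theorem is isomorphic to $S(\p_+)\otimes\C_\lambda$ as a $\k_\C$-module (geometrically, $N(\lambda)$ is the space of holomorphic jets at the base point of the holomorphic sections over $G/K$ realizing $\pi^G(\lambda)$). Because $\tau Z=Z$, the decomposition \eqref{E:kp} is $\tau$-stable, $\p_+^\tau=\p_+\cap\h_\C$ is the holomorphic tangent space of the complex totally geodesic submanifold $H/(H\cap K)\hookrightarrow G/K$, and $S(\p_+)\cong S(\p_+^\tau)\otimes S(\p_+^{-\tau})$ as $\k^\tau$-modules, the first factor being, after twisting by the $\k^\tau$-character $\C_{\lambda|_{\t^\tau}}$, the Harish-Chandra module of the lowest weight representation $\pi^H(\lambda|_{\t^\tau})$ of $H$ (which is of scalar type).

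Next I would invoke the classical Hua--Kostant--Schmid theorem twice. First, $\tau\theta$ is an involution with $\g^{\tau\theta}=\k^\tau\oplus\p^{-\tau}$ a reductive Lie algebra of Hermitian type whose characteristic element is $Z$ and whose holomorphic tangent space is $\p_+^{-\tau}$; its noncompact simple ideals are the $\g^{(i)}$ of \eqref{eqn:Hdeco} (of Hermitian type; cf.~\cite[Table~3.4.1]{kobayashi07}), and $\Delta(\p_+\cap\g^{(i)})=\Delta(\p_+^{-\tau}\cap\g^{(i)})$. Applying Hua--Kostant--Schmid to each $\g^{(i)}$, the $\k^\tau$-module $S(\p_+^{-\tau})$ decomposes without multiplicity into irreducibles indexed by $(a_j^{(i)})\in\prod_{i=1}^L\Lambda^{(i)}$, the piece so labelled having highest weight $\sum_{i,j}a_j^{(i)}\nu_j^{(i)}$ with respect to $\Delta^+(\k^\tau,\t^\tau)$; tensoring by $\C_{\lambda|_{\t^\tau}}$ shifts these highest weights to $\lambda|_{\t^\tau}+\sum_{i,j}a_j^{(i)}\nu_j^{(i)}$. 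Secondly, for every $\mu$ for which the generalized Verma module $N^H(\mu)$ of $H$ is irreducible one has $N^H(\mu)|_{\k^\tau}\cong S(\p_+^\tau)\otimes V_\mu$, where $V_\mu$ is the irreducible $\k^\tau$-module of highest weight $\mu$. Computing $\operatorname{ch}\bigl(N(\lambda)|_{\k^\tau}\bigr)$ in these two ways and cancelling the invertible factor $\operatorname{ch}S(\p_+^\tau)$ gives
\[
\sum_{\mu}\operatorname{ch}V_\mu=\sum_{(a_j^{(i)})\in\prod_{i=1}^L\Lambda^{(i)}}\operatorname{ch}V_{\lambda|_{\t^\tau}+\sum_{i,j}a_j^{(i)}\nu_j^{(i)}},
\]
the left sum being over the lowest $\k^\tau$-types $\mu$ of the irreducible constituents of $\pi^G(\lambda)|_H$, counted with multiplicity. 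Since the $\operatorname{ch}V_\mu$ are linearly independent and the weights $\lambda|_{\t^\tau}+\sum a_j^{(i)}\nu_j^{(i)}$ are pairwise distinct (the $\nu_j^{(i)}$ being linearly independent), the constituents are exactly the $\pi^H\bigl(\lambda|_{\t^\tau}+\sum a_j^{(i)}\nu_j^{(i)}\bigr)$ with $(a_j^{(i)})\in\prod_i\Lambda^{(i)}$, each occurring once (in agreement with Fact~\ref{F:mf}); this is \eqref{eqn:HKSK}.

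The step I expect to be the main obstacle is justifying, \emph{before} the character count, that every irreducible constituent of $\pi^G(\lambda)|_H$ is a holomorphic representation $\pi^H(\mu)$ of $H$ with $N^H(\mu)$ irreducible (indeed a holomorphic discrete series), so that $N^H(\mu)|_{\k^\tau}\cong S(\p_+^\tau)\otimes V_\mu$. Each constituent is a unitarizable lowest weight $(\h,H\cap K)$-module because $\p_-^\tau\subset\p_-$ and the restriction is unitary and discretely decomposable; but one still has to use the strict positivity \eqref{E:pos} to keep the occurring lowest weights in the region where no reduction or vanishing occurs and where the spectrum is purely discrete, which is exactly the input quoted from \cite{kobayashi94, xkAnn98}. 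A more self-contained variant, not relying on Fact~\ref{F:mf}, is the fully geometric one: realize $\pi^G(\lambda)$ in a weighted Bergman space over $G/K$, filter by order of vanishing along $H/(H\cap K)$, identify the associated graded with weighted Bergman spaces over $H/(H\cap K)$ twisted by the symmetric powers of the holomorphic conormal bundle $H\times_{H\cap K}(\p_+^{-\tau})^{\vee}$, split this filtration by admissibility, and then apply Hua--Kostant--Schmid as above; the crux is again to verify, via \eqref{E:pos}, that each graded piece is a nonzero holomorphic discrete series of $H$.
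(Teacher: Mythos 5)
The first thing to say is that the paper itself contains no proof of Fact~\ref{F:f1}: it is imported as a known result, with the proof deferred to \cite{kobayashi97} and, in detail, to \cite[Thm.~8.3]{kobayashi07}. Measured against that cited proof, your sketch follows essentially the same route: identify the underlying $(\g,K)$-module of $\pi^G(\lambda)$ with $S(\p_+)\otimes\C_{\lambda}$, split $S(\p_+)\cong S(\p_+^{\tau})\otimes S(\p_+^{-\tau})$ along the $\tau$-eigenspaces, apply the classical Hua--Kostant--Schmid decomposition to $S(\p_+^{-\tau})$ viewed as the holomorphic tangent space of the Hermitian symmetric space $H^a/(H^a\cap K)$ of the associated pair, and reassemble by a $\k^{\tau}$-character count. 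Your bookkeeping is consistent with the paper's normalizations: the shift by $\lambda|_{\t^{\tau}}$, the index set $\prod_{i}\Lambda^{(i)}$, and the pairwise distinctness of the parameters $\lambda|_{\t^{\tau}}+\sum_{i,j}a_j^{(i)}\nu_j^{(i)}$ coming from the linear independence of the strongly orthogonal roots.

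The step you flag as the main obstacle is indeed where all the substance lies, and as written it is not closed. Cancelling $\operatorname{ch}S(\p_+^{\tau})$ is only legitimate after you know three things: (a) the restriction $\pi^G(\lambda)|_H$ has no continuous spectrum and is $H\cap K$-admissible, so that a formal $\t^{\tau}$-character comparison accounts for the whole restriction --- this is exactly the discrete decomposability input from \cite{kobayashi94, xkAnn98} for pairs of holomorphic type; (b) every irreducible constituent is a unitary lowest weight module of $H$ whose $(\h,H\cap K)$-module is the \emph{full} generalized Verma module $S(\p_+^{\tau})\otimes V_{\mu}$ and not a proper quotient of it, which requires locating the occurring parameters $\mu$ in the range where $N^H(\mu)$ is irreducible (this uses the positivity \eqref{E:pos} together with the structure of the cone of occurring weights, and is not literally contained in the discrete decomposability references you cite); and (c) the constituents are actually in the holomorphic discrete series of $H$ so that the notation $\pi^H(\mu)$ of the paper applies. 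These are handled in \cite[Sect.~8]{kobayashi07}. So: correct strategy, essentially identical to the proof the paper points to, with the genuinely nontrivial step correctly located but delegated to the literature rather than proved.
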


When $H$ is a maximal compact subgroup $K$, 
 each summand in \eqref{eqn:HKSK} is finite-dimensional,
 and the formula \eqref{eqn:HKSK} was known
 by Hua \cite{hua} (classical groups), 
 Kostant (unpublished), 
and Schmid \cite{schmid}.  
The general case for noncompact $H$
 was given in Kobayashi \cite{kobayashi97} 
 with detailed proof in {\cite[Thm.~8.3]{kobayashi07}}. 
See also \cite[Cor.~3.12]{k12}
 for a formulation in the category ${\mathcal{O}}$.

\smallskip
In comparison to Fact \ref{F:f1}, Theorem C may be restated as follows: 
\begin{thmalph}
\label{thm:Cprime}
Suppose $\lambda $ satisfies \eqref{E:pos}. Then $n ( \mathcal{O}_{\lambda}^G, \, \mathcal{O}_{\mu}^H) \ne 0$ 
if and only if 
$$\mu \in {\operatorname{Conv}}\bigl(\operatorname{Supp}_{H} \bigl(
\pi^G(\lambda)|_{H} \bigr)\bigr),$$ 
where 
${\operatorname{Conv}}(S)$ denotes the convex hull of a set $S$, 
 and $\mathcal{O}_{\lambda}^G := \Ad^{\ast}(G) \cdot \lambda$
 and $\mathcal{O}_{\mu}^H := \Ad^{\ast}(H) \cdot \mu$.  
\end{thmalph}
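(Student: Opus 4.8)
The plan is to derive Theorem~\ref{thm:Cprime} directly from Theorem~\ref{T:elliptic} together with Fact~\ref{F:f1}, by matching the two parametrizations. By Theorem~\ref{T:elliptic}, the set of dominant $\mu \in \sqrt{-1}(\t^{\tau})^*_+$ with $n(\mathcal{O}_\lambda^G,\mathcal{O}_\mu^H)\neq 0$ is exactly $(\lambda + \operatorname{Cone}(\p_+^{-\tau})) \cap \sqrt{-1}(\t^{\tau})^*_+$; moreover, by Proposition~\ref{P:p2} every coadjoint orbit $\mathcal{O}^H$ with nonzero Corwin--Greenleaf number is elliptic, hence of the form $\mathcal{O}^H_\mu$, so it suffices to identify $\lambda + \operatorname{Cone}(\p_+^{-\tau})$ with $\operatorname{Conv}(\operatorname{Supp}_H(\pi^G(\lambda)|_H))$ inside $\sqrt{-1}(\t^{\tau})^*_+$. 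On the representation side, Fact~\ref{F:f1} says $\operatorname{Supp}_H(\pi^G(\lambda)|_H)$ consists of the highest weights $\lambda|_{\t^{\tau}} + \sum_{i,j} a_j^{(i)}\nu_j^{(i)}$ with $(a_j^{(i)})_j \in \Lambda^{(i)} = C_+^{(i)}\cap\mathbb{Z}^{r_i}$ — more precisely, of the corresponding infinitesimal characters/highest weights viewed as points of $\sqrt{-1}(\t^{\tau})^*$.

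First I would observe that, since $\tau$ is of holomorphic type, $\lambda \in \sqrt{-1}\c(\k)^*$ is $\tau$-fixed, so $\lambda|_{\t^{\tau}}$ may simply be written $\lambda$ under the inclusions $\sqrt{-1}\c(\k)^* \subset \sqrt{-1}(\t^{\tau})^*$; thus the weights appearing in \eqref{eqn:HKSK} are precisely the lattice points of the affine translate $\lambda + \{\sum_{i,j} a_j^{(i)}\nu_j^{(i)} : (a_j^{(i)})_j \in \Lambda^{(i)}\}$. Next I would take convex hulls: the convex hull of $\bigcup_i \Lambda^{(i)} = \bigcup_i (C_+^{(i)} \cap \mathbb{Z}^{r_i})$ inside $\prod_i \mathbb{R}^{r_i}$ is exactly $\prod_i C_+^{(i)}$, because each $C_+^{(i)}$ is a rational polyhedral cone (cut out by the inequalities $t_1^{(i)} \ge \cdots \ge t_{r_i}^{(i)} \ge 0$ with integer coefficients) and a rational polyhedral cone is the closed convex hull of its lattice points. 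Pushing this forward by the linear map $(t_j^{(i)}) \mapsto \sum_{i,j} t_j^{(i)}\nu_j^{(i)}$ — which by definition \eqref{E:cone} has image $\operatorname{Cone}(\p_+^{-\tau})$ — and translating by $\lambda$, I get $\operatorname{Conv}(\operatorname{Supp}_H(\pi^G(\lambda)|_H)) = \lambda + \operatorname{Cone}(\p_+^{-\tau})$, at least after intersecting with the dominant chamber. Combining with Theorem~\ref{T:elliptic}(i)$\Leftrightarrow$(iii) then yields the claim: $n(\mathcal{O}_\lambda^G,\mathcal{O}_\mu^H)\neq 0$ iff $\mu \in \lambda + \operatorname{Cone}(\p_+^{-\tau})$ iff $\mu \in \operatorname{Conv}(\operatorname{Supp}_H(\pi^G(\lambda)|_H))$.

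The step I expect to require the most care is the convex-hull computation together with the bookkeeping of parametrizations — specifically, checking that the finite (when $H=K$) versus countable (general $H$) nature of the support does not affect the equality of convex hulls, that one is comparing highest weights rather than full coadjoint orbits (so that the $\t^{\tau}$-dominance constraint is handled consistently on both sides), and that the ambient lattice/rationality hypothesis making "convex hull of lattice points in a rational cone $=$ the cone" is genuinely satisfied here. One subtlety is that $\operatorname{Supp}_H(\pi^G(\lambda)|_H)$ is a set of points already constrained to lie in $\sqrt{-1}(\t^{\tau})^*_+$, whereas $\operatorname{Cone}(\p_+^{-\tau})$ need not be contained in the dominant chamber; I would address this by noting that $\lambda$ is strictly dominant (regular for $\Delta^+(\p_+)$ by \eqref{E:pos}) so that $\lambda + \operatorname{Cone}(\p_+^{-\tau})$ does land in the appropriate chamber, or else by intersecting both sides with $\sqrt{-1}(\t^{\tau})^*_+$ throughout and remarking that Theorem~\ref{T:elliptic} already restricts attention to such $\mu$. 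Modulo these routine verifications, the proof is a direct translation and needs no new ideas beyond Theorem~\ref{T:elliptic} and Fact~\ref{F:f1}.
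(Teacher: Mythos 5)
Your proposal is correct and matches the paper's intent: the paper presents Theorem~\ref{thm:Cprime} as a restatement of Theorem~\ref{T:elliptic} via the parametrization \eqref{eqn:Supp}, so the only content is the identification ${\operatorname{Conv}}({\operatorname{Supp}}_H(\pi^G(\lambda)|_H)) = \lambda + \operatorname{Cone}(\p_+^{-\tau})$, which you carry out exactly as intended (noting that the parameter set is the product $\prod_i \Lambda^{(i)}$ rather than a union, and that $\lambda+\operatorname{Cone}(\p_+^{-\tau})\subset\sqrt{-1}(\t^{\tau})^*_+$ by Lemma~\ref{L:l1}(2), so no closure or chamber intersection is actually needed).
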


In Theorem \ref{thm:Cprime}, 
 we have regarded ${\operatorname{Supp}}_H(\pi^G(\lambda)|_H)$
 as a subset of dominant integral weights with respect to the positive system
 $\Delta^+(\k^{\tau}, \t^{\tau}) = \Delta^+(\h \cap \k, \t^{\tau})$, 
 namely, 
\begin{equation}
\label{eqn:Supp}
   {\operatorname{Supp}}_H(\pi^G(\lambda)|_H)
   =
   \bigcup
   \{\lambda|_{\t^{\tau}} 
     + \sum_{i=1}^L \sum_{j=1}^{r_i} a_j^{(i)} \nu_j^{(i)}\}, 
\end{equation}
where the union is taken over the countable set \eqref{eqn:aij}.

\medskip
\section{Proof of the main theorems}
This section gives the proof of Theorems \ref{thm:A} and \ref{T:elliptic}.  

\subsection{$(G, G^{\tau})$ and its associated symmetric pair
$(G, G^{\tau \theta})$} 

In general,
 it is not easy to describe $H$-coadjoint orbits
 on the intersection 
$$
\mathcal{O}^G \cap \PR^{-1} (\mathcal{O}^H)
$$
for a pair $(G,H)$ of reductive Lie groups.  
When $(G,H)$ is a symmetric pair, 
 our key idea is to use another symmetric pair $(G, H^a)$, 
 referred to as the {\it{associated symmetric pair}}, 
 defined as follows.  

\smallskip
Let $\tau$ be an involutive automorphism of $G$
 commuting with the Cartan 
involution $\theta$. Then, the composition $\tau \theta$ is also an 
involutive automorphism. We set 
$$H := (G^{\tau})_0, \quad H^a := (G^{\tau \theta})_0$$ 
the identity components of the fixed point groups 
$G^{\tau}$ and $G^{\tau \theta}$, respectively.
Then the reductive groups $H$ and $H^a$ have the following Cartan 
decompositions 
$$
H= (H \cap K) \exp(\p^{\tau}), \quad \! H^a= (H \cap K) \exp(\p^{-\tau}),
$$ 
respectively. 
We observe that both $H$ and $H^a$ have the same maximal 
compact subgroups $H \cap K$, and that the `noncompact part' is complementary 
to each other, namely,  
$$
\p = \p^{\tau} + \p^{-\tau} \qquad \text{(direct sum decomposition)}.
$$
This observation will be crucial 
 in the proof of Theorems \ref{thm:A} and \ref{T:elliptic} below.  

\subsection{Hermitian symmetric space $H^a / H^a \cap K$} 
We return to our previous setting where $\tau$ is of holomorphic type, 
equivalently, $\tau Z = Z$. 
Since $\theta Z = Z$,
 the involution $\tau \theta$ is also of 
holomorphic type, and consequently, $\tau \theta$ preserves the decomposition 
\eqref{E:kp}. Therefore, we have a compatible direct sum decomposition of the 
complexified Lie algebra $\g^{\tau \theta}_{\C}$ of $H^a$:
$$
\g^{\tau \theta}_{\C} = (\k^{\tau}_{\C}) + (\p^{-\tau}_{+})
+ (\p^{-\tau}_{-}).
$$
This decomposition makes $H^a / (H^a \cap K)$ a Hermitian symmetric space, 
which is naturally embedded into the Hermitian symmetric space $G/K$. 

\medskip 
We first prepare notation
 when $H^a=(G^{\tau \theta})_0$ contains only one noncompact simple factor,
 namely,
 $L=1$
 by applying the structural results
 of Hermitian symmetric spaces \cite{korany}
 to $H^a/(H^a \cap K)$.  
In this case we shall write $\{ \nu_1, \cdots, \nu_r \}$ 
 for the maximal set of strongly orthogonal roots in $\Delta(\p^{-\tau}_{+})$
 instead of $\{\nu_1^{(i)}, \cdots, \nu_{r_i}^{(i)}\}$
 as in Section \ref{SS:s2}. 
For each $j$, 
 we define an $\sL_{2}$-triple $\{H_j, E_j, E_{-j} \}$ in 
$\g^{\tau \theta}_{\C}$ as follows: 

\smallskip
$$E_j \in (\g^{\tau \theta}_{\C})_{\nu_j}, \,
E_{-j}\in (\g^{\tau \theta}_{\C})_{-\nu_j}, \, \text{ and } 
H_j \in \sqrt{-1} \t^{\tau \theta}.$$  
Here $(\g^{\tau \theta}_{\C})_{\nu_j}$ denotes the root space in 
$\g^{\tau \theta}_{\C}$ corresponding to 
$\nu_j \in \sqrt{-1} (\t^{\tau})^*$, and 
$H_j := \frac{2 \nu_j}{\langle \nu_j, \nu_j \rangle}$ 
if we identify $\sqrt{-1} \t^*$ with $\sqrt{-1} \t$ by the Killing form. 
Furthermore, we may and do choose 
$E_j$ and $E_{-j}$
 such that the following elements $X_j$ and $Y_j$ belong to the real Lie algebra $\g$: 

$$
X_j := E_j + E_{-j}, \quad Y_j := - \sqrt{-1}(E_j - E_{-j}). 
$$
Then 
$
X_j, \, Y_j \in \p^{\tau \theta} = \p^{-\tau}.
$
Next, let us define the following two subspaces:

\begin{alignat}{2}
\a &:= \bigoplus_{j=1}^r \R X_{j} \quad \! 
   &&\subset 
\p^{\tau \theta} (= \p^{-\tau}), 
\label{E:r4} 
\\
\t^{-} &:= \sqrt{-1} \bigoplus_{j=1}^{r} \mathbb{R} H_j \quad \! 
    &&\subset \t^{\tau \theta} (= \t^{\tau}). 
\notag 
\end{alignat}
Let  $\t^{+}$ be the orthogonal complement of $\t^{-}$ in $\t^{\tau}$ with 
respect to the Killing form. Then $\t^{+} + \a$ is a maximally split Cartan 
subalgebra of $\g^{\tau \theta}$. 

\smallskip 
For the general case 
 where $L$ may be greater than 1, 
 we write $X_j^{(i)}$ instead of $X_j$
 ($1 \le j \le r_i$, $1 \le i \le L$).  
We take a positive system 
$\Sigma^{+}(\g^{\tau \theta}, \a)$ such that the corresponding dominant 
Weyl chamber ${\mathfrak{a}}_+$ is given by 
\begin{align}
  {\mathfrak{a}}_+^{(i)} &:=\{
    \sum_{j=1}^{r_i} t_j^{(i)} X_j^{(i)} 
                      :
                      (t_j^{(i)})_{1 \le j \le r_i} \in C_+^{(i)} \}
\quad(1 \le i \le L), 
\notag
\\
\label{E:r6} 
  {\mathfrak{a}}_+ &:=\{
    \sum_{i=1}^{L} \sum_{j=1}^{r_i}
                      t_j^{(i)} X_j^{(i)} 
                      :
                      (t_j^{(i)})_{1 \le j \le r_i} \in C_+^{(i)}
\quad
\text{for $1 \le i \le L$}\}.  
\end{align}

Correspondingly,
 we define a subset of the connected abelian group 
$A= \exp(\a)$ by 
\begin{equation}  \label{E:r7} 
A_+ = \exp(\a_{+})=\exp(\a_{+}^{(1)}) \cdots \exp(\a_{+}^{(L)}). 
\end{equation}  

Via the Killing form, 
 the projection $\PR \colon \sqrt{-1}\g^* \rightarrow  \sqrt{-1}\k^*$ is 
identified with the map  
\begin{equation}
\label{eqn:prtheta}
\PR^{\theta} : \sqrt{-1}\g \rightarrow \sqrt{-1}\k, \; \; 
X \mapsto \frac{1}{2} (X + \theta X)
\end{equation}
We recall from \cite[Prop.~2.4 and Lem.~2.5]{nasrin} an explicit 
formula for $\PR^{\theta} \bigl(\Ad(a) Z \bigr)$
 applied to each noncompact simple factor
 $G^{(i)}$ of $H^a$.


\begin{lem} \label{L:l1}
Suppose $1 \le i \le L$.  
Let $Z^{(i)} \in \sqrt{-1}{\mathfrak{c}}({\mathfrak{k}}^{(i)})$ 
 be the characteristic element 
 of the simple Hermitian Lie algebra
 $\g^{(i)} = \k^{(i)} + \p^{(i)}$.  
For $t_1^{(i)}$, $\cdots$, $t_{r_i}^{(i)} \in {\mathbb{R}}$, 
 we define an element $a^{(i)}$ of $A$ by 
\[
  a^{(i)}:=\exp (\sum_{j=1}^{r_i}t_j^{(i)} X_j^{(i)}).  
\]
Then we have 
\begin{enumerate}
\item[{\rm{(1)}}] $\PR^{\theta} (\Ad(a^{(i)}) Z^{(i)}) = Z^{(i)} + \sum_{j=1}^{r_i}(\sinh t_j^{(i)})^{2} H_j^{(i)}$;
\item[{\rm{(2)}}]  $\PR^{\theta} (\Ad(a^{(i)}) Z^{(i)}) \in \sqrt{-1} (\t^{\tau})^{*}_{+}$;
\item[{\rm{(3)}}] $a^{(i)} \mapsto \PR^{\theta} (\Ad(a^{(i)}) Z_i)$ is injective when restricted to 
$A_+^{(i)}:=\exp({\mathfrak{a}}_+^{(i)})$.  
\end{enumerate}
In (2) we have identified $\sqrt{-1}\g$ with $\sqrt{-1}\g^{\ast}$
 via the Killing form.  
\end{lem}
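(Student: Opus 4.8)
The plan is to reduce everything to a rank-one $\sL_2$-computation and then sum over the strongly orthogonal roots. Since the factor $\g^{(i)}$ is fixed, I would suppress the superscript $(i)$ and work inside the single simple Hermitian factor with characteristic element $Z=Z^{(i)}$, strongly orthogonal roots $\nu_1,\dots,\nu_r$, and $\sL_2$-triples $\{H_j,E_j,E_{-j}\}$. First I would observe that the $\sL_2$-triples attached to distinct strongly orthogonal roots commute with each other: because $\nu_j\pm\nu_k$ is not a root, the root vectors $E_{\pm j}$ and $E_{\pm k}$ commute, and hence the subalgebras $\mathfrak{s}_j=\R X_j\oplus\R Y_j\oplus\R H_j$ mutually commute. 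Consequently $\exp(\sum_j t_jX_j)=\prod_j\exp(t_jX_j)$ and $\Ad(a)$ factors as a commuting product of the individual $\Ad(\exp t_jX_j)$.

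For part (1), the key step is the single-root identity. Inside the copy of $\sL_2$ spanned by $\{H_j,X_j,Y_j\}$ one computes $\Ad(\exp t_jX_j)$ acting on $Z$. Since $[X_j,Z]$ is a multiple of $Y_j$ (as $Z\in\sqrt{-1}\c(\k)$ and $X_j\in\p$, with $X_j=E_j+E_{-j}$ an eigenvector combination under $\ad Z$ with eigenvalues $\pm1$), and then $[X_j,Y_j]$ is a multiple of $H_j$, a direct exponentiation — exactly the standard hyperbolic rotation in $\mathfrak{sl}_2$ — gives $\Ad(\exp t_jX_j)Z = Z + (\sinh^2 t_j)H_j + (\text{something in }\p)$, where the $\p$-component involves $\sinh t_j\cosh t_j$ times $Y_j$. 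I would verify the normalization $H_j=2\nu_j/\langle\nu_j,\nu_j\rangle$ makes the coefficient come out as $\sinh^2 t_j$ precisely (rescaling $E_{\pm j}$ so that $X_j,Y_j$ are real and the bracket relations are the standard ones). Summing over the commuting factors, $\Ad(a)Z = Z + \sum_j(\sinh^2 t_j)H_j + (\p\text{-part})$, and applying $\PR^\theta(X)=\tfrac12(X+\theta X)$ kills the $\p$-part (since $\theta=-1$ on $\p$) while fixing $Z$ and each $H_j\in\sqrt{-1}\t^\tau\subset\k$. This yields formula (1). (Here I am invoking \cite[Prop.~2.4, Lem.~2.5]{nasrin} as the excerpt permits, but the above is the self-contained argument.)

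Part (2) follows because $Z\in\sqrt{-1}\c(\k)$ is dominant (it defines the positive noncompact roots via \eqref{E:kp}, so $\langle Z,\beta\rangle>0$ for $\beta\in\Delta(\p_+)$, in particular $\langle Z,\nu_j\rangle>0$), and each $H_j$ is a positive multiple of $\nu_j\in\Delta(\p_+^{-\tau})\subset\sqrt{-1}(\t^\tau)^*_+$; since $\sinh^2 t_j\ge0$, the sum $Z+\sum_j(\sinh^2 t_j)H_j$ lies in the closed dominant chamber $\sqrt{-1}(\t^\tau)^*_+$ (a convex cone stable under adding dominant elements). The only point needing care is compatibility of the chosen positive systems, which is exactly \eqref{E:compa} together with the definition of the $\nu_j$ as the successive highest strongly orthogonal roots, so they are dominant. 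For part (3), injectivity on $A_+^{(i)}$, I would use that on the chamber $C_+$ the coordinates satisfy $t_1\ge\cdots\ge t_r\ge0$; the map $(t_j)\mapsto(\sinh^2 t_j)$ is a diffeomorphism of $[0,\infty)^r$ onto itself (strictly increasing in each variable), and the $H_j$ are linearly independent (being positive multiples of the strongly orthogonal, hence linearly independent, roots $\nu_j$). Therefore $\sum_j(\sinh^2 t_j)H_j$ determines the tuple $(\sinh^2 t_j)$ hence $(t_j)$ hence $a^{(i)}$.

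The main obstacle I anticipate is purely bookkeeping: pinning down the normalizations of $E_{\pm j}$ so that the hyperbolic-rotation computation produces the coefficient $\sinh^2 t_j$ with no stray constant, and checking that $H_j\in\sqrt{-1}\t^{\tau\theta}=\sqrt{-1}\t^\tau$ really lands on the $\k$-side of the Cartan decomposition so that $\PR^\theta$ fixes it. Neither is deep, and both are already recorded in \cite{nasrin}; the conceptual content is just the commuting-$\sL_2$ reduction plus dominance of $Z$ and the $\nu_j$.
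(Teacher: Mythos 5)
The paper itself gives no proof of Lemma \ref{L:l1}: it is quoted verbatim from \cite[Prop.~2.4 and Lem.~2.5]{nasrin}, so your reconstruction has to stand on its own. For parts (1) and (3) it does. The reduction to mutually commuting $\sL_2$-triples is exactly the right mechanism (strong orthogonality kills $[E_{\pm j},E_{\pm k}]$ and orthogonality of the $\nu_j$ kills $\nu_k(H_j)$), and the rank-one computation works out as you predict: with $[E_j,E_{-j}]=H_j$ one gets $\Ad(\exp t_jX_j)Z = Z+(\sinh^2 t_j)H_j - \sqrt{-1}\,\sinh t_j\cosh t_j\, Y_j$, the $\p$-part is annihilated by $\PR^{\theta}=\tfrac12(\id+\theta)$, and the contributions of distinct $j$ add because $\ad(X_k)$ annihilates $H_j$ and $Y_j$ for $k\neq j$. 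Part (3) is also fine: on $C_+^{(i)}$ all $t_j\ge 0$, $\sinh^2$ is injective on $[0,\infty)$, and the $H_j$ are linearly independent since the $\nu_j$ are nonzero and mutually orthogonal.

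The gap is in part (2). You assert that each $\nu_j$ individually lies in $\sqrt{-1}(\t^{\tau})^{*}_+$ \lq\lq being the successive highest strongly orthogonal roots, so they are dominant.\rq\rq\ Only $\nu_1^{(i)}$ is a highest weight of a $\k^{\tau}$-module and hence dominant; the later $\nu_k$ are highest elements of the \emph{subset} of roots strongly orthogonal to the earlier ones, and need not be dominant. Concretely, for $G=\Sp(2,\R)$, $H=K$, one has $\nu_1=2e_1$, $\nu_2=2e_2$ and $\langle 2e_2, e_1-e_2\rangle<0$, so $\nu_2$ is not dominant and $Z+(\sinh^2 t_2)H_2$ is not in the closed positive chamber when $t_2\neq 0$, $t_1=0$. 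The correct argument must use the ordering built into $C_+^{(i)}$: for $t_1\ge\cdots\ge t_{r_i}\ge 0$ the coefficients $c_j=\sinh^2 t_j$ are decreasing, and the standard structural fact for Hermitian symmetric spaces (Schmid, Kor\'anyi--Wolf) is that $\sum_j c_j\nu_j$ is $\Delta^+(\k^{\tau},\t^{\tau})$-dominant precisely because $c_1\ge\cdots\ge c_{r_i}\ge 0$ --- this is the same fact that makes $\operatorname{Cone}(\p_+^{-\tau})$ land in the dominant chamber. (This also shows that assertion (2) should be read for $a^{(i)}\in A_+^{(i)}$, which is the only way the paper uses it, e.g.\ in Proposition \ref{prop:l2}; for arbitrary real $t_j$ it fails by the same example.) With that replacement your proof is complete.
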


\smallskip
\subsection{The formula for $\PR \bigl(\Ad (a) \lambda \bigr)$}

Suppose $\lambda \in \sqrt{-1} ([\k, \k] + \p)^{\perp}$. By identifying 
$\sqrt{-1}\g^*$ with $\sqrt{-1}\g$ we see that $\lambda$ is of the form 
$\lambda = c Z$ for some $c \in \R$.

\smallskip 
Similarly to the map $\PR^{\theta}\colon \g \to \k$
 (see \eqref{eqn:prtheta}),
 we define a linear map 
$$
\PR^{\tau} \colon \sqrt{-1}\g \rightarrow \sqrt{-1}\h, \; \; 
X \mapsto \frac{1}{2} (X + \tau X),
$$
which is identified with the projection 
$ 
\PR\colon \sqrt{-1}\g^* \rightarrow \sqrt{-1}\h^*
$. 
Then we have:

\begin{prop} \label{P:p1}
Suppose $\lambda = c Z$ with $c>0$. 
Recall from \eqref{E:cone} the definition of the closed cone
 ${\operatorname{Cone}}(\p_+^{-\tau})$ in $\sqrt{-1}(\t^{\tau})^{\ast}$.  
Then,  
$$\PR^{\tau}\bigl(\Ad (A_+) \lambda \bigr) = 
\PR^{\theta}\bigl(\Ad (A_+) \lambda \bigr) = \lambda + 
\operatorname{Cone} (\p_{+}^{-\tau}).$$ 
\end{prop}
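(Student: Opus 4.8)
The plan is to reduce the statement to the factor-by-factor computation already recorded in Lemma \ref{L:l1}. Write $\lambda = cZ$ with $c>0$. The key structural point is that the characteristic element $Z$ of $\g$ decomposes, under \eqref{eqn:Hdeco}, as $Z = Z^{(0)} + \sum_{i=1}^L Z^{(i)}$, where $Z^{(i)}$ is the characteristic element of the simple Hermitian factor $\g^{(i)}$ and $Z^{(0)}$ lies in the compact factor $\g^{(0)}$. Since $\a = \bigoplus_{i=1}^L \bigoplus_{j=1}^{r_i}\R X_j^{(i)}$ with $X_j^{(i)} \in \p^{-\tau}\cap\g^{(i)}$, and the factors $\g^{(i)}$ commute, an element $a = \exp(\sum_{i,j} t_j^{(i)}X_j^{(i)}) \in A_+$ factors as $a = a^{(1)}\cdots a^{(L)}$ with $a^{(i)} = \exp(\sum_j t_j^{(i)}X_j^{(i)})$, and $\Ad(a^{(i)})$ acts as the identity on $\g^{(i')}$ for $i'\ne i$ and on $\g^{(0)}$. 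Hence
\[
  \Ad(a)Z = Z^{(0)} + \sum_{i=1}^L \Ad(a^{(i)})Z^{(i)}.
\]

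Next I would apply $\PR^\theta$ and $\PR^\tau$ to this identity. First observe that on the relevant subspace these two projections agree: since $\tau$ and $\theta$ act the same way on $\k$ (both fix it) and $Z^{(i)}\in\sqrt{-1}\c(\k^{(i)})\subset\sqrt{-1}\k$, while $\Ad(a^{(i)})Z^{(i)}$ lies in $\sqrt{-1}\g^{(i)}$ where $\tau=\theta$ (because $\g^{(i)}\subset\g^{\tau\theta}$ forces $\tau=\theta$ there), one gets $\PR^\tau(\Ad(a^{(i)})Z^{(i)}) = \PR^\theta(\Ad(a^{(i)})Z^{(i)})$, and both kill nothing of $Z^{(0)}$ since $Z^{(0)}\in\k$. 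This gives the first asserted equality $\PR^\tau(\Ad(A_+)\lambda) = \PR^\theta(\Ad(A_+)\lambda)$. Then Lemma \ref{L:l1}(1) computes $\PR^\theta(\Ad(a^{(i)})Z^{(i)}) = Z^{(i)} + \sum_{j=1}^{r_i}(\sinh t_j^{(i)})^2 H_j^{(i)}$, so summing over $i$ and adding $Z^{(0)}$,
\[
  \PR^\theta(\Ad(a)\lambda) = c\Big(Z + \sum_{i=1}^L\sum_{j=1}^{r_i}(\sinh t_j^{(i)})^2 H_j^{(i)}\Big).
\]

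It remains to identify the image of this map, as $(t_j^{(i)})$ ranges over $\bigcup_i C_+^{(i)}$, with $\lambda + \operatorname{Cone}(\p_+^{-\tau})$. Under the Killing-form identification $H_j^{(i)} = \tfrac{2\nu_j^{(i)}}{\langle\nu_j^{(i)},\nu_j^{(i)}\rangle}$ is a positive multiple of $\nu_j^{(i)}$, so the set of attainable values $c\sum_{i,j}(\sinh t_j^{(i)})^2 H_j^{(i)}$ is contained in the cone $\operatorname{Cone}(\p_+^{-\tau})$ defined in \eqref{E:cone}. For the reverse inclusion: given any target $\sum_{i,j}s_j^{(i)}\nu_j^{(i)}$ with $(s_j^{(i)})_j\in C_+^{(i)}$, I need to solve $c\cdot\tfrac{2}{\langle\nu_j^{(i)},\nu_j^{(i)}\rangle}(\sinh t_j^{(i)})^2 = s_j^{(i)}$ for $t_j^{(i)}\ge 0$; since $s\mapsto(\sinh s)^2$ is a continuous increasing bijection of $[0,\infty)$ onto $[0,\infty)$ and the $s_j^{(i)}$ are nonnegative and weakly decreasing in $j$, the solution $t_j^{(i)}$ is nonnegative and weakly decreasing, i.e. $(t_j^{(i)})_j\in C_+^{(i)}$, so the point $\exp(\sum_{i,j}t_j^{(i)}X_j^{(i)})$ lies in $A_+$ and maps to the desired target. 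This proves $\PR^\theta(\Ad(A_+)\lambda) = \lambda + \operatorname{Cone}(\p_+^{-\tau})$ and completes the proof.

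The main obstacle, and the point that needs a careful word rather than a wave of the hand, is the identification $\tau = \theta$ on each noncompact simple factor $\g^{(i)}$ of $\g^{\tau\theta}$ — equivalently that $\Ad(a^{(i)})Z^{(i)}$ lands in a subspace where $\PR^\tau$ and $\PR^\theta$ coincide. Everything else is either the elementary $\sL_2$-computation imported from Lemma \ref{L:l1} or the monotonicity of $(\sinh)^2$; but one should double-check that the decomposition $Z = Z^{(0)} + \sum Z^{(i)}$ is compatible with $\tau$ (it is, since $\tau Z = Z$ and $\tau$ permutes — in fact fixes — the factors $\g^{(i)}$) and that no cross terms between distinct factors arise when applying the projections.
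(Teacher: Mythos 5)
Your proof follows essentially the same route as the paper's: the first equality comes from $\Ad(A_+)\lambda \subset \sqrt{-1}\,\g^{\tau\theta}$, where $\tau=\theta$, and the image is then computed factor by factor via Lemma \ref{L:l1}(1) together with the bijectivity of $t\mapsto(\sinh t)^2$ on $[0,\infty)$ (your verification of the reverse inclusion is in fact slightly more explicit than the paper's). One parenthetical slip worth noting: $\tau$ does not fix $\k$ pointwise in general; what you actually use, and what is true, is that $\tau$ and $\theta$ agree on $\g^{\tau\theta}$ and both fix each $Z^{(i)}\in\k^{\tau}$.
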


\begin{proof}
We recall that 
$\a_{+} \subset \g^{-\tau} \cap \g^{-\theta} \subset \g^{\tau \theta}$. 
Since $\tau$ is of holomorphic type, we have $\tau \lambda = \lambda$, and 
therefore, $\Ad (A_+) \lambda \subset \sqrt{-1}\g^{\tau \theta}$. 
Since $\tau = \theta$ on $\g^{\tau \theta}_{\C}$, we have 

\begin{equation} \label{E:pr1}
\PR^{\tau} = \frac{1}{2}(\operatorname{id} + \tau) 
= \frac{1}{2}(\operatorname{id} + \theta) = \PR^{\theta} \quad 
\text{on $\g^{\tau \theta}_{\C}$}. 
\end{equation}

In particular,
 we have 
$$
\PR^{\tau}\bigl(\Ad (A_+) \lambda \bigr) = 
\PR^{\theta}\bigl(\Ad(A_+) \lambda \bigr).
$$
Therefore,
 we shall focus on $\PR^{\theta}(\Ad(A_+) \lambda)$ from now.  
According to the direct sum decomposition \eqref{eqn:Hdeco}, 
 the characteristic element $Z \in \c(\k)$ is decomposed as 
\[
  Z=Z^{(0)} + Z^{(1)} + \cdots + Z^{(L)}, 
\]
where $Z^{(i)} \in \sqrt{-1}{\mathfrak{c}}(\k^{(i)})$
 for $0 \le i \le L$, 
 and $Z^{(i)}$ $(1 \le i \le L)$
 are the characteristic elements
 for the Hermitian Lie algebra $\g^{(i)}$.  
We apply Lemma \ref{L:l1} (1) for the computation of  
$\PR^{\theta}\bigl(\Ad(a) \lambda \bigr)$
 with $a=a^{(1)} \cdots a^{(L)} \in A_+$, and get 

\begin{align*}
\PR^{\theta} \bigl(\Ad (A_+) \lambda \bigr)
=&
c\, \{Z^{(0)} + \sum_{i=1}^{L}(Z^{(i)} 
                  + \sum_{j=1}^{r_i} (\sinh t_j^{(i)})^2 H_j^{(i)})
   : (t_j^{(i)})_{1 \le j \le r_i} \in C_+^{(i)} \,(1 \le i \le L) \}
\\
=&c\, \{Z + \sum_{i=1}^{L} \sum_{j=1}^{r_i} (\sinh t_j^{(i)})^2 H_j^{(i)}
   : (t_j^{(i)})_{1 \le j \le r_i} \in C_+^{(i)} \,(1 \le i \le L) \}.  
\end{align*}
Hence we have proved Proposition \ref{P:p1}
\end{proof}

Next we prove the following proposition.  

\begin{prop} \label{prop:l2}
Fix $\lambda = c Z$ with $c>0$.  
Then the following three conditions on $a, a' \in A_{+}$ are equivalent:
\begin{newenumerate}
\item[{\rm{(i)}}] $\PR^{\tau}(\Ad(a) \lambda)$ and $\PR^{\tau}(\Ad(a') \lambda)$ are 
conjugate by the adjoint action of $H$;
\item[{\rm{(ii)}}] $\PR^{\tau}(\Ad(a) \lambda)$ and $\PR^{\tau}(\Ad(a') \lambda)$ are 
conjugate by the adjoint action of $H \cap K$;
\item[{\rm{(iii)}}] $a = a'$.
\end{newenumerate}
\end{prop}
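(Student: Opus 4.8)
The plan is to prove the cyclic chain of implications (iii) $\Rightarrow$ (ii) $\Rightarrow$ (i) $\Rightarrow$ (iii); the first implication is trivial and the second is immediate since $H\cap K\subset H$, so all the content is in (i) $\Rightarrow$ (iii). First I would observe, exactly as in the proof of Proposition \ref{P:p1}, that $\tau\lambda=\lambda$ forces $\Ad(A_+)\lambda\subset\sqrt{-1}\g^{\tau\theta}$ and that $\PR^\tau=\PR^\theta$ on $\g^{\tau\theta}_{\mathbb C}$, so by Lemma \ref{L:l1}(1) we have the explicit formula
\[
   \PR^{\tau}(\Ad(a)\lambda)=c\Bigl(Z+\sum_{i=1}^L\sum_{j=1}^{r_i}(\sinh t_j^{(i)})^2 H_j^{(i)}\Bigr)
\]
for $a=\exp\bigl(\sum_{i,j}t_j^{(i)}X_j^{(i)}\bigr)\in A_+$, and likewise for $a'$ with parameters $(t_j^{(i)}{}')$. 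Note this element lies in $\sqrt{-1}\,\t^-\subset\sqrt{-1}\,\t^\tau$, hence in a Cartan subalgebra of $\h\cap\k$ (indeed of $\h$), and by Lemma \ref{L:l1}(2) it lies in the closed dominant chamber $\sqrt{-1}(\t^\tau)^*_+$.

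Next I would reduce the $H$-conjugacy in (i) to conjugacy under the Weyl group of $\h$. Since $\PR^\tau(\Ad(a)\lambda)$ and $\PR^\tau(\Ad(a')\lambda)$ are elliptic elements of $\sqrt{-1}\h^*$ lying in the fixed Cartan subalgebra $\sqrt{-1}(\t^\tau)^*$, standard structure theory of reductive Lie groups tells us that they are $\Ad^*(H)$-conjugate if and only if they are conjugate under the Weyl group $W(\h,\t^\tau)=W(\h\cap\k,\t^\tau)$. But both elements already lie in the closed dominant Weyl chamber $\sqrt{-1}(\t^\tau)^*_+$, and each $W$-orbit meets the closed dominant chamber in exactly one point; therefore (i) forces the literal equality
\[
   \sum_{i=1}^L\sum_{j=1}^{r_i}(\sinh t_j^{(i)})^2 H_j^{(i)}
   =\sum_{i=1}^L\sum_{j=1}^{r_i}(\sinh t_j^{(i)}{}')^2 H_j^{(i)}
\]
in $\sqrt{-1}\,\t^-$. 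Since the $H_j^{(i)}=2\nu_j^{(i)}/\langle\nu_j^{(i)},\nu_j^{(i)}\rangle$ are linearly independent (the $\nu_j^{(i)}$ being a strongly orthogonal, hence linearly independent, family), we conclude $(\sinh t_j^{(i)})^2=(\sinh t_j^{(i)}{}')^2$ for every $i,j$. As $t_j^{(i)},t_j^{(i)}{}'\ge 0$ by the definition of $C_+^{(i)}$ and $\sinh$ is injective on $[0,\infty)$, this yields $t_j^{(i)}=t_j^{(i)}{}'$ for all $i,j$, i.e.\ $a=a'$, which is (iii). Finally (iii) $\Rightarrow$ (ii) $\Rightarrow$ (i) is clear, closing the cycle.

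The main obstacle, and the only place requiring care, is the reduction step: justifying that two elliptic elements sitting in a common Cartan subalgebra of the (possibly disconnected fixed-point group, but here connected by assumption) reductive group $H$ are $H$-conjugate only if $W(\h\cap\k,\t^\tau)$-conjugate. This is where connectedness of $H$ and the fact that $\t^\tau$ is a Cartan subalgebra of $\k^\tau=\h\cap\k$ (established in Section \ref{SS:s2}) are used; one should also note $\PR^\tau(\Ad(a)\lambda)$ is compact, i.e.\ $\Ad^*(H)$-conjugate into $\sqrt{-1}(\h\cap\k)^*$, so that the relevant Weyl group is the compact one $W(\h\cap\k,\t^\tau)$ rather than anything larger. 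Everything after that is the elementary observation that distinct points of the closed dominant chamber lie in distinct Weyl orbits, together with injectivity of $\sinh$ on $[0,\infty)$ — both routine.
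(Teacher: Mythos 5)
Your proposal is correct and follows essentially the same route as the paper: project via $\PR^{\tau}=\PR^{\theta}$ into the closed dominant chamber $\sqrt{-1}(\t^{\tau})^{\ast}_+$ (Lemma \ref{L:l1}(2)), use that two dominant elements are $H$-conjugate only if equal, and then conclude $a=a'$ from injectivity on $A_+$. The only cosmetic difference is that you rederive the injectivity of $a \mapsto \PR^{\theta}(\Ad(a)\lambda)$ on $A_+$ from the explicit $\sinh$-formula of Lemma \ref{L:l1}(1), whereas the paper simply cites Lemma \ref{L:l1}(3).
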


\begin{proof}
Since $\PR^{\tau}\bigl(\Ad (a) \lambda \bigr) = 
\PR^{\theta}\bigl(\Ad(a) \lambda \bigr)$ for any $a \in A$,
 we see
 that $\PR^{\tau}(\Ad(a) \lambda) \in \sqrt{-1}(\t^{\tau})_+^{\ast}$
{}from Lemma \ref{L:l1} (2).   
Since two elements in $\sqrt{-1}(\t^{\tau})_+^{\ast}$ 
 is conjugate under $H=(G^{\tau})_0$
 if and only if they coincide,
 we get the implications (i) $\Rightarrow$ (ii) $\Rightarrow$ (iii)
 by Lemma \ref{L:l1} (3).  
The implication (iii) $\Rightarrow$ (i) is obvious.  
Thus Proposition \ref{prop:l2} is proved.  
\end{proof}

\medskip
\subsection{Proof of Theorems A and C}

\smallskip
Since $\a$ is a maximal abelian subspace of $\g^{- \tau} \cap \g^{- \theta}$, 
we have the generalized Cartan decomposition \cite[Thm.~4.1]{flen-jen} 
 for the semisimple symmetric pair $(G,H)$:

\begin{equation} \label{E:car-decomp}
G = H A_+ K. 
\end{equation}

\medskip
Suppose $\lambda \in \sqrt{-1} \c (\k)^*$. Since $K$ stabilizes $\lambda$, 
the decomposition \eqref{E:car-decomp} implies
\begin{equation} \label{E:adorbit}
\mathcal{O}^{G}_{\lambda} = \Ad(G) \lambda = \Ad (H) \Ad (A_{+}) \lambda. 
\end{equation}

\noindent
\begin{proof}
[Proof of Theorem \ref{thm:A}]
We take any two elements $x,\, x' \in \mathcal O^{G} \cap 
(\PR^{\tau})^{-1} (\mathcal O^{H})$. We shall prove that $x' \in \Ad(H) x.$
It follows from the generalized Cartan decomposition \eqref{E:car-decomp} that 
there exist $a, \, a' \in A_{+}$ and $h, \, h' \in H$ such that 
\begin{equation} \label{E:xhaz}
x = \Ad(h) \Ad(a) \lambda, \; x' = \Ad(h') \Ad(a') \lambda. 
\end{equation}

Since the projection $\PR^{\tau}\colon \g \rightarrow \h$ respects 
$H$-action, we have 

$$\PR^{\tau}(x) = \Ad(h) \PR^{\tau}(\Ad(a) \lambda), \;
\PR^{\tau}(x') = \Ad(h') \PR^{\tau}(\Ad(a') \lambda).$$ 
By our assumption,
 both $\PR^{\tau}(x)$ and $\PR^{\tau}(x')$ are 
contained in the same $H$-orbit $\mathcal O^{H}$. 
Therefore, 
$\PR^{\tau}(\Ad(a) \lambda)$ and $\PR^{\tau}(\Ad(a') \lambda)$ are conjugate 
by an element of $H$. 
By Proposition \ref{prop:l2},
 we conclude $a = a'$. 
Using \eqref{E:xhaz} again, we see that $x$ is conjugate to $x'$ under $H$. 
This is what we wanted to prove. 
\end{proof}

Finally,
 we shall determine $H$-coadjoint orbits ${\mathcal{O}}^H$
 such that $n({\mathcal{O}}^G, {\mathcal{O}}^H) \ne 0$.  
The first step is to show 
 that ${\mathcal{O}}^H$ must be an elliptic orbit.  

\medskip

\begin{prop} \label{P:p2}
Let $\mathcal{O}^G$ and $\mathcal{O}^H$ be coadjoint orbits in $\sqrt{-1}\g^*$ 
and $\sqrt{-1}\h^*$, respectively. Suppose $\mathcal{O}^G$ satisfies 
\eqref{E:OGZ}.  
If $\mathcal{O}^{G} \cap \PR^{-1}(\mathcal O^{H}) \ne \emptyset$, 
 then $\mathcal O^{H}$ is an elliptic orbit. 
\end{prop}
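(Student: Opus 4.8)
The plan is to exploit the generalized Cartan decomposition \eqref{E:car-decomp} together with the formula in Proposition \ref{P:p1} to pin down which elements of $\sqrt{-1}\h^{\ast}$ can occur in the image of $\mathcal{O}^G$ under $\PR$. Suppose $x \in \mathcal{O}^G \cap \PR^{-1}(\mathcal{O}^H)$; writing $\mathcal{O}^G = \mathcal{O}^G_\lambda$ with $\lambda = cZ$ (WLOG $c>0$ after replacing $\lambda$ by $-\lambda$ if necessary, using \eqref{E:pos}), the decomposition \eqref{E:adorbit} gives $x = \Ad(h)\Ad(a)\lambda$ for some $h \in H$, $a \in A_+$. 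Since $\PR^\tau$ intertwines the $H$-action, $\PR(x) = \Ad^\ast(h)\,\PR^\tau(\Ad(a)\lambda)$, so $\mathcal{O}^H = \Ad^\ast(H)\cdot \PR^\tau(\Ad(a)\lambda)$. By Proposition \ref{P:p1} (or directly Lemma \ref{L:l1}(1)--(2)), $\PR^\tau(\Ad(a)\lambda) = \PR^\theta(\Ad(a)\lambda)$ lies in $\sqrt{-1}(\t^\tau)^\ast \subset \sqrt{-1}\k^\ast$, in fact inside $\sqrt{-1}(\t^\tau)^\ast_+$.

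First I would record that $\sqrt{-1}(\t^\tau)^\ast \subset \sqrt{-1}\k^\ast \subset \sqrt{-1}\h^\ast$ under the identifications fixed in Section \ref{SS:s2}, since $\t^\tau \subset \h \cap \k$. Hence the representative $\PR^\tau(\Ad(a)\lambda)$ of $\mathcal{O}^H$ actually lies in $\sqrt{-1}\k^\ast$, more precisely in $\sqrt{-1}(\h\cap\k)^\ast$, so the $H$-orbit $\mathcal{O}^H = \Ad^\ast(H)\cdot\PR^\tau(\Ad(a)\lambda)$ meets $\sqrt{-1}\k^\ast$. By the definition of ellipticity given just before Theorem \ref{T:elliptic} — namely, $\mathcal{O}^H$ is elliptic iff $\mathcal{O}^H \cap \sqrt{-1}\k^\ast \ne \emptyset$ — this shows $\mathcal{O}^H$ is an elliptic orbit, which is exactly the claim.

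Two minor points deserve care. One is the sign normalization: the hypothesis \eqref{E:OGZ} only gives $\lambda \in \sqrt{-1}\c(\k)^\ast$, so $\lambda = cZ$ for some $c \in \R$; if $c = 0$ then $\mathcal{O}^G = \{0\}$ and $\mathcal{O}^H = \{0\}$ is trivially elliptic, while if $c \ne 0$ one replaces $\lambda$ by $-\lambda$ if needed (equivalently replaces $Z$ by $-Z$, interchanging $\p_+$ and $\p_-$) so that $c>0$, which is the hypothesis under which Proposition \ref{P:p1} and Lemma \ref{L:l1} are stated. The other is that we are using $\PR^{-1}(\mathcal{O}^H) \ne \emptyset$ on the intersection, which via the Killing-form identification means $\PR^\tau\colon\sqrt{-1}\g\to\sqrt{-1}\h$; this is exactly the map appearing in Proposition \ref{P:p1}, so there is no mismatch.

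I do not expect a genuine obstacle here: the statement is essentially a bookkeeping consequence of the structural results already in hand, the crucial input being Lemma \ref{L:l1}(2) which places $\PR^\theta(\Ad(a^{(i)})Z^{(i)})$ inside $\sqrt{-1}(\t^\tau)^\ast_+ \subset \sqrt{-1}\k^\ast$, together with the identity $\PR^\tau = \PR^\theta$ on $\g^{\tau\theta}_\C$ from \eqref{E:pr1} and the decomposition \eqref{E:adorbit}. The only thing to be mildly attentive to is keeping the chain of inclusions $\c(\k)\subset\t^\tau\subset\h\cap\k\subset\k\subset\g$ and their duals straight, so that "lands in $\sqrt{-1}(\t^\tau)^\ast_+$'' correctly translates into "$\mathcal{O}^H$ meets $\sqrt{-1}\k^\ast$,'' i.e.\ $\mathcal{O}^H$ is elliptic.
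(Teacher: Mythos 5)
Your proof is correct and follows essentially the same route as the paper's: decompose $\mathcal{O}^G_\lambda=\Ad(H)\Ad(A_+)\lambda$ via the generalized Cartan decomposition $G=HA_+K$, use $H$-equivariance of $\PR^\tau$ to reduce to a representative $\PR^\tau(\Ad(a)\lambda)$ with $a\in A_+$, and invoke Proposition \ref{P:p1} (via Lemma \ref{L:l1}) to place it in $\sqrt{-1}(\t^\tau)^\ast_+$, forcing $\mathcal{O}^H$ to be elliptic. Your extra care about the $c=0$ case and the sign normalization matches the paper's treatment.
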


\begin{proof}
If $\mathcal{O}^G = \{0 \}$, 
 then the condition 
$\mathcal{O}^{G} \cap \PR^{-1}(\mathcal O^{H}) \ne \emptyset$ obviously
 implies 
$\mathcal{O}^H = \{0 \}$. Hence $\mathcal{O}^H$ is an elliptic orbit. 

From now, we assume that $\mathcal{O}^G \ne \{0 \}$. Without loss of 
generality, we may assume $\mathcal{O}^G = \Ad (G) \lambda$ where 
$\lambda = c Z \quad \! (c>0)$
 via the identification of $\sqrt{-1} \g$ with $\sqrt{-1} \g^*$ as before. 
If $\mathcal{O}^{G} \cap \PR^{-1}(\mathcal O^{H}) \ne \emptyset$, we find 
$g \in G$ such that 
\begin{equation}
\label{eqn:prgH}
\PR \bigl(\Ad(g) \lambda \bigr) \in \mathcal O^{H}.
\end{equation}
We write 
$$
g = h a k \in G \quad (h \in H, \, a \in A_+, \, k \in K)
$$ 
according to \eqref{E:car-decomp}.
Then it follows from \eqref{eqn:prgH}
 that $\PR \bigl(\Ad(a) \lambda \bigr) \in \mathcal O^{H}$ 
 because $\Ad(k)\lambda=\lambda$.  
By Proposition \ref{P:p1}, we have 
$\PR \bigl(\Ad(a) \lambda \bigr) \in \sqrt{-1} (\t^{\tau})^{*}_{+}$. Hence 
$\sqrt{-1} (\t^{\tau})^{*}_{+} \cap \mathcal O^{H} \ne \emptyset .$ 
Therefore, $\mathcal O^{H}$ must be an elliptic orbit.
\end{proof}

\smallskip
\noindent 
{\textit{Proof of Theorem C.}} \enspace 
Suppose 
$\mathcal{O}^{G} = \mathcal{O}^{G}_\lambda$ with 
$\lambda = c Z \quad \! (c>0)$. Then the proof of Proposition \ref{P:p2} 
asserts that if 
$\mathcal{O}^{G}_\lambda \cap \PR^{-1}(\mathcal O^{H}) \ne \emptyset$, 
then $\PR \bigl(\Ad(a) \lambda \bigr) \in \mathcal O^{H}$ for some 
$a \in A_+$. Clearly, the opposite implication also holds. Thus we have 
shown that $n(\mathcal{O}^{G}_\lambda, \mathcal{O}^H) \ne 0$ if and only if 
$\mathcal{O}^H \cap \bigl( 
\lambda + \operatorname{Cone} (\p_{+}^{-\tau}) \bigr) \ne \emptyset$ because 
$$\PR \bigl(\Ad(A_+) \lambda \bigr) = 
\lambda + \operatorname{Cone} (\p_{+}^{-\tau})$$ 
by Proposition \ref{P:p2}. 
Hence we have the equivalence 
(i) $\Leftrightarrow$ (iii) in Theorem C. The equivalence 
(i) $\Leftrightarrow$ (ii) follows from Theorem A.   \qed.

\section{Visible actions on coadjoint orbits}
We end this article with discussion about another aspect
 on the geometry of the coadjoint orbits.

A holomorphic action of a Lie group $H$ on a connected complex manifold $M$
 is said to be {\it{strongly visible}}
 if there exist a totally real submanifold $S$, 
 referred to as a {\it{slice}}, 
 and an anti-holomorphic diffeomorphism $\sigma$ of $M$
 which preserves every $H$-orbit in $M$
 such that generic $H$-orbits meet $S$
 and $\sigma|_S = {\operatorname{id}}$, 
 see \cite[Def.~3.3.1]{kobayashi05}.  
The proof of the multiplicity-free theorem
 (Fact \ref{F:mf}) is based
 on the following fact:
\begin{fact}
[{\cite{kobayashi07TG}}]
\label{fact:visible}
Let $G$ be a Hermitian Lie group.  
For any symmetric pair $(G,H)$, 
 the $H$-action on $G/K$ is strongly visible.  
\end{fact}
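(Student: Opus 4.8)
The plan is to prove Fact \ref{fact:visible} — that for any symmetric pair $(G,H)$ with $G$ Hermitian, the $H$-action on the bounded symmetric domain $G/K$ is strongly visible — by constructing explicitly a slice and an anti-holomorphic diffeomorphism preserving $H$-orbits. First I would reduce to the case where $\tau$ (the involution defining $H$) commutes with the Cartan involution $\theta$, which is harmless up to conjugation. The key structural input is the same one exploited throughout Section 3: the \emph{associated symmetric pair} $(G,H^a)$ with $H^a=(G^{\tau\theta})_0$, together with the generalized Cartan decomposition $G = H A_+ K$ from \cite{flen-jen}, where $\a = \bigoplus_{i,j}\R X_j^{(i)} \subset \p^{-\tau}\cap\p^{-\theta}$. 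Writing $G/K$ as the image of $\p_+$ under the Harish-Chandra embedding, the decomposition $G=HA_+K$ says precisely that every $H$-orbit on $G/K$ meets the submanifold $\exp(\a_+)\cdot o$ (where $o=eK$); so the natural candidate for the slice $S$ is the totally real submanifold of $G/K$ swept out by $A_+$ (or, to get an honest totally real submanifold meeting \emph{generic} orbits, a slightly thickened version of it inside $H^a/(H^a\cap K)$, which is a totally real form of the complexified tangent structure when $\tau$ is holomorphic — note $\p_+^{-\tau}$ carries the relevant complex structure and $\a$ lies in its "real part").

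Second, I would produce the anti-holomorphic involution $\sigma$. Because $\tau$ is of holomorphic type, $\tau\theta$ is again of holomorphic type, hence $\tau\theta$ acts \emph{holomorphically} on $G/K$; meanwhile $\theta$ itself — being the Cartan involution — acts as an anti-holomorphic map on the bounded domain $G/K$ (concretely $z\mapsto -\bar z$ type behavior, or intrinsically because $\theta$ reverses the sign of the complex structure on $\p_+$ is \emph{false} — rather one uses the standard fact that the geodesic symmetry at $o$, which is $\theta$-related, gives an anti-holomorphic map when composed appropriately). The cleanest choice: let $\sigma$ be the anti-holomorphic diffeomorphism of $G/K$ given by the complex conjugation with respect to the real form determined by $\tau$, i.e. on the Harish-Chandra realization in $\p_+$, $\sigma$ is induced by the $\tau$-conjugate-linear map fixing $\p_+^{\tau}$ and negating-conjugating $\p_+^{-\tau}$ appropriately, arranged so that $\sigma$ fixes $S$ pointwise. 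One then checks $\sigma$ is anti-holomorphic (it is conjugate-linear in the linear model), that $\sigma|_S=\operatorname{id}$ (by construction $S$ lies in the fixed-point set), and — the substantive point — that $\sigma$ preserves every $H$-orbit: for $h\in H$ and $x\in G/K$ one needs $\sigma(h\cdot x)\in H\cdot\sigma(x)$, which follows because $\sigma$ intertwines the $H$-action with the action of $\tau(H)=H$ up to an element that is absorbed, using that $H=G^\tau_0$ commutes with $\tau$-conjugation in the right sense.

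Concretely the three verifications to carry out, in order, are: (1) $S$ is a totally real submanifold of $G/K$ of the correct dimension (half the real dimension, i.e. $\dim_{\R}\a = r$ plus the $\t^+$-directions, matching $\dim_{\R}\p^{-\tau}$'s "real part") — this is where one invokes that $\a_+\subset\p^{-\tau}$ and that the complex structure $\operatorname{ad}(Z)$ on $\p_+^{-\tau}$ makes the relevant subspace totally real; (2) generic $H$-orbits meet $S$ — immediate from $G=HA_+K$ once one checks the thickened slice captures the $\t^+$-part, which in the $L=1$ case is the content of the maximal-split-Cartan statement "$\t^++\a$ is maximally split in $\g^{\tau\theta}$" already noted in the excerpt; (3) $\sigma$ preserves $H$-orbits and fixes $S$. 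I expect the \textbf{main obstacle} to be verification (3) in the generality of all holomorphic-type symmetric pairs — one must simultaneously arrange $\sigma$ anti-holomorphic, $\sigma|_S=\operatorname{id}$, and $\sigma(Hx)=Hx$, and these three constraints interact; the trick is to define $\sigma$ not abstractly but as the explicit conjugation $z\mapsto \overline{\tau(z)}$ on the Harish-Chandra model (where $\tau$ acts $\C$-linearly on $\p_+$ and the bar is ordinary conjugation with respect to the compact real form), observe it is visibly anti-holomorphic and visibly fixes $\p_+^\tau\cap(\text{domain})\supset S$, and then deduce orbit-preservation from the identity $h\cdot\overline{\tau(z)} = \overline{\tau(\tau(h)\cdot z)} = \overline{\tau(h\cdot z)}$ valid for $h\in G^\tau$, which shows $\sigma$ commutes with the $H$-action outright — in particular preserves every $H$-orbit. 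This reduces Fact \ref{fact:visible} to routine checks about the Harish-Chandra embedding and the generalized Cartan decomposition, both of which are available from the cited literature (\cite{korany}, \cite{flen-jen}, and \cite{kobayashi07TG} itself).
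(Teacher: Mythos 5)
The paper does not prove Fact \ref{fact:visible}: it is imported from \cite{kobayashi07TG}, and the only in-paper indication of its proof is the remark after Fact \ref{fact:OGvisible} that the slice can be taken to be $\overset\circ{A}_+\cdot o$ in the notation of \eqref{E:car-decomp}. Your skeleton --- slice $A_+\cdot o$ supplied by the generalized Cartan decomposition $G=HA_+K$, plus an anti-holomorphic involution $\sigma$ --- is indeed the skeleton of the cited proof, and you correctly identify verification (3) as the substantive point. (Two smaller remarks: the definition of strong visibility quoted in Section 4 imposes no dimension requirement on $S$ beyond being totally real, so the ``thickening by $\t^+$'' you worry about is unnecessary; and the Fact covers \emph{every} symmetric pair, including anti-holomorphic type, where $\tau(\p_+)=\p_-$, the splitting $\p_+=\p_+^{\tau}+\p_+^{-\tau}$ does not exist, and the entire Section 3 apparatus you borrow is unavailable --- so your argument, even if repaired, proves a strictly weaker statement than Fact \ref{fact:visible}.)

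The execution of step (3) contains two concrete errors. First, $\sigma(z)=\overline{\tau(z)}$ does not fix your slice: you claim its fixed set contains $\p_+^{\tau}\cap(\text{domain})\supset S$, but $\a=\bigoplus\R X_j^{(i)}$ lies in $\p^{-\tau}$ (that is the whole point of passing to the associated pair $H^a$), and the Harish--Chandra embedding sends $\exp(\sum_j t_j X_j)\cdot o$ to $\sum_j\tanh(t_j)E_j$ with $E_j\in\p_+^{-\tau}$, on which $\tau$ acts by $-1$. So $S$ sits inside $\p_+^{-\tau}$, not $\p_+^{\tau}$, and your $\sigma$ moves every nonzero point of it. The sign can be repaired (replace $\tau$ by $\tau\theta$, which is $+1$ on $\p_+^{-\tau}$), but then the second error becomes visible: the identity $h\cdot\overline{\tau(z)}=\overline{\tau(h\cdot z)}$ tacitly assumes that the conjugation $z\mapsto\bar z$ commutes with the $G$-action on $\p_+$. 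It does not --- conjugation with respect to a real form of $\g_{\C}$ intertwines the action of $g$ with that of $\bar g$, and $\bar h=h$ only for $h$ lying in that real form, which need not contain $H$. What is actually needed is an involutive automorphism $\sigma$ of $G$ that simultaneously (i) reverses $Z$, hence acts anti-holomorphically on $G/K$, (ii) normalizes $H$, so that $\sigma(Hx)=H\sigma(x)$ and hence $\sigma$ preserves each orbit meeting the slice because it fixes a point of that orbit, and (iii) is the identity on $\a$. Producing such a $\sigma$ compatible with an arbitrary $\tau$ is precisely the nontrivial content of \cite{kobayashi07TG}; it does not reduce to routine checks on the Harish--Chandra model, and your proposal names this obstacle without overcoming it.
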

Any nonzero coadjoint orbit ${\mathcal{O}}^G$
 satisfying the condition \eqref{E:OGZ}
 is isomorphic to the Hermitian symmetric space $G/K$.  
Hence Fact \ref{fact:visible} may be seen
 as a result on the geometry of coadjoint orbits:
\begin{fact}
\label{fact:OGvisible}
Let ${\mathcal{O}}^G$ be a coadjoint orbit 
satisfying \eqref{E:OGZ}.  
For any symmetric pair $(G,H)$, 
 the $H$-action on the coadjoint orbit ${\mathcal{O}}^G$
 is strongly visible.  
\end{fact}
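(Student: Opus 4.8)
The plan is to reduce Fact \ref{fact:OGvisible} to Fact \ref{fact:visible} by identifying the coadjoint orbit ${\mathcal{O}}^G$ with the Hermitian symmetric space $G/K$ as a $G$-manifold, and then transporting the structure (complex structure, anti-holomorphic diffeomorphism, totally real slice) along this identification. First I would invoke the observation already recorded after \eqref{E:OGZ}: if ${\mathcal{O}}^G \ne \{0\}$ satisfies \eqref{E:OGZ}, then ${\mathcal{O}}^G = \Ad^{\ast}(G)\lambda$ with $\lambda = cZ$ for some $c \ne 0$, and the stabilizer of $\lambda$ is exactly $K$ (since $Z \in \sqrt{-1}\c(\k)$ is a characteristic element, its centralizer in $\g$ is $\k$, and $K$ is connected). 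Hence the orbit map $G/K \to {\mathcal{O}}^G$, $gK \mapsto \Ad^{\ast}(g)\lambda$, is a $G$-equivariant diffeomorphism. Pulling back the complex structure from $G/K$ (the one with holomorphic tangent bundle $G\times_K \p_+ \to G/K$) makes ${\mathcal{O}}^G$ into a complex manifold on which $G$, and a fortiori $H$, acts holomorphically.

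Next I would transport the data from Fact \ref{fact:visible}. Fact \ref{fact:visible} provides, for the symmetric pair $(G,H)$, a totally real submanifold $S \subset G/K$ and an anti-holomorphic diffeomorphism $\sigma$ of $G/K$ preserving every $H$-orbit, with $\sigma|_S = {\operatorname{id}}$ and generic $H$-orbits meeting $S$. Under the $G$-equivariant diffeomorphism $G/K \xrightarrow{\sim} {\mathcal{O}}^G$ above, the image of $S$ is a totally real submanifold of ${\mathcal{O}}^G$ (totally real is preserved by diffeomorphisms that intertwine the complex structures, which holds here by construction of the complex structure on ${\mathcal{O}}^G$), the conjugate of $\sigma$ is an anti-holomorphic diffeomorphism of ${\mathcal{O}}^G$, and $H$-equivariance of the identification ensures it still preserves every $H$-orbit, restricts to the identity on the transported slice, and that generic $H$-orbits still meet the slice. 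This verifies \cite[Def.~3.3.1]{kobayashi05} verbatim for the $H$-action on ${\mathcal{O}}^G$, so the action is strongly visible.

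There is essentially no hard step here: the content is entirely in Fact \ref{fact:visible}, and what remains is the routine checks that (a) the stabilizer of $cZ$ is $K$, and (b) the notion of strongly visible action is transported along a $G$-equivariant biholomorphism. If anything deserves a word of care, it is making sure the complex structure one puts on ${\mathcal{O}}^G$ is canonical—one should note that the pulled-back complex structure from $G/K$ is the same as the $G$-invariant complex structure ${\mathcal{O}}^G$ carries intrinsically as an elliptic coadjoint orbit (via the $\ad(Z)$-eigenspace decomposition \eqref{E:kp}), so that the statement is not merely an artifact of a choice. With that remark in place the proof is immediate.

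\begin{proof}[Proof of Fact \ref{fact:OGvisible}]
If ${\mathcal{O}}^G = \{0\}$ the assertion is trivial, so assume ${\mathcal{O}}^G \ne \{0\}$.  By the remarks following \eqref{E:OGZ}, via the identification $\sqrt{-1}\g^{\ast} \simeq \sqrt{-1}\g$ we may write ${\mathcal{O}}^G = \Ad(G)\lambda$ with $\lambda = cZ$, $c \ne 0$, where $Z \in \sqrt{-1}\c(\k)$ is the characteristic element.  The centralizer of $Z$ in $\g$ is the $0$-eigenspace $\k_{\C}$ of $\ad(Z)$ intersected with $\g$, i.e.\ $\k$; since $K$ is connected, the stabilizer of $\lambda$ in $G$ is exactly $K$, and the orbit map induces a $G$-equivariant diffeomorphism $\varphi \colon G/K \xrightarrow{\ \sim\ } {\mathcal{O}}^G$, $gK \mapsto \Ad(g)\lambda$.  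Equip ${\mathcal{O}}^G$ with the $G$-invariant complex structure obtained by transporting, via $\varphi$, the complex structure on $G/K$ with holomorphic tangent bundle $G \times_K \p_+ \to G/K$; this coincides with the complex structure ${\mathcal{O}}^G$ carries intrinsically as the elliptic orbit through $cZ$, determined by the $\ad(Z)$-eigenspace decomposition \eqref{E:kp}.  Then $\varphi$ is a $G$-equivariant biholomorphism, and in particular the $H$-action on ${\mathcal{O}}^G$ is holomorphic.

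By Fact \ref{fact:visible}, there exist a totally real submanifold $S$ of $G/K$ and an anti-holomorphic diffeomorphism $\sigma$ of $G/K$ such that $\sigma$ preserves every $H$-orbit, $\sigma|_S = {\operatorname{id}}$, and generic $H$-orbits meet $S$.  Set $S' := \varphi(S)$ and $\sigma' := \varphi \circ \sigma \circ \varphi^{-1}$.  Since $\varphi$ is a biholomorphism, $S'$ is a totally real submanifold of ${\mathcal{O}}^G$ and $\sigma'$ is an anti-holomorphic diffeomorphism of ${\mathcal{O}}^G$.  Because $\varphi$ is $H$-equivariant, $\sigma'$ preserves every $H$-orbit in ${\mathcal{O}}^G$, generic $H$-orbits meet $S'$, and $\sigma'|_{S'} = {\operatorname{id}}$.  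This is precisely the definition of a strongly visible action (\cite[Def.~3.3.1]{kobayashi05}), so the $H$-action on ${\mathcal{O}}^G$ is strongly visible.
\end{proof}
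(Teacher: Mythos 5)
Your proposal is correct and follows essentially the same route as the paper, which simply observes that any nonzero orbit satisfying \eqref{E:OGZ} is isomorphic to $G/K$ as a $G$-space and then cites Fact \ref{fact:visible}. You have merely spelled out the routine details (stabilizer of $cZ$ equals $K$, transport of the complex structure, slice, and anti-holomorphic involution along the $G$-equivariant biholomorphism) that the paper leaves implicit.
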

In this case,
 the slice $S$ can be taken to be $\overset\circ{A}_+ \cdot o$, 
 where $\overset\circ{A}_+$ denotes 
 the set of interior points of $A_+$
 and $o$ is the fixed point of $K$ 
 with notation for the generalized Cartan decomposition \eqref{E:car-decomp}.  

In turn, 
 the Cayley transform of $A_+$ for the subgroup $H^a=(G^{\tau\theta})_0$
 (not for $H=(G^{\tau})_0$)
 followed by the shift of $\lambda|_{\t^{\tau}}$
 gives the support
 ${\operatorname{Supp}}_H(\pi^G|_H)$, 
 as is seen in \eqref{eqn:Supp}, 
 where $\pi^G$ is the irreducible unitary representation of $G$
 attached to the coadjoint orbit ${\mathcal{O}}^G$
 and $\lambda$ is determined by ${\mathcal{O}}^G$
 by the condition
$
   {\mathcal{O}}^G \cap \sqrt{-1} \c (\k)^{\ast} =\{\lambda\}.  
$
This viewpoint from visible actions provides yet another perspective
 of Theorem \ref{thm:Cprime}
 on the Kirillov correspondence
 between branching laws of unitary representations
 and coadjoint orbits with momentum map
 $\mu \colon {\mathcal{O}}^G \to \sqrt{-1} \h^{\ast}$
 for the Hamiltonian action
 of the subgroup $H$ on ${\mathcal{O}}^G$.  

\vskip 3pc
{\bf{Acknowledgements:}}\enspace
The author would like to express his gratitude to the organizers,
 A.~Borodin, A.~Kirillov, S.~Morier-Genoud, A.~Okounkov, V.~Ovsienko, 
M.~Pevzner, N.~Rozhkovskaya, M.~Schlichenmaier, and R.~Yu,
of  the conference
\lq{Representation Theory
 at the Crossroads of Modern Mathematics}\rq\
 in honor of Alexandre A.~Kirillov on his 81st birthday
at Reims, May 29--June 2, 2017 
for their warm hospitality
during the stimulating conference.

This work was partially supported by Grant-in-Aid for Scientific 
Research (A)
(18H03669), Japan Society for the Promotion of Science.

\medskip

\end{document}